\documentclass[matann]{svjour}
\usepackage{amsmath}
\usepackage{amsxtra}
\usepackage{amscd}
\usepackage{amsthm}
\usepackage{amsfonts}
\usepackage{amssymb}
\usepackage{eucal}
\usepackage[matrix,arrow,curve]{xy}
\usepackage{scalerel}



\theoremstyle{plain}
\newtheorem{Thm}[subsection]{Theorem}
\newtheorem{Cor}[subsection]{Corollary}
\newtheorem{Prop}[subsection]{Proposition}
\newtheorem{Conj}[subsection]{Conjecture}

\theoremstyle{definition}
\newtheorem{Def}[subsection]{Definition}

\theoremstyle{remark}



\errorcontextlines=0
\numberwithin{equation}{section}

\newif\ifShowLabels
\ShowLabelstrue
\newdimen\theight
\def\TeXref#1{%
    \leavevmode\vadjust{\setbox0=\hbox{{\tt
        \quad\quad  {\small \normalshape #1}}}%
    \theight=\ht0
    \advance\theight by \lineskip
    \kern -\theight \vbox to
    \theight{\rightline{\rlap{\box0}}%
    \vss}%
    }}%

\ShowLabelsfalse

\renewcommand{\sec}[2]{\section{#2}\label{S:#1}%
    \ifShowLabels \TeXref{{S:#1}} \fi}
\newcommand{\ssec}[2]{\subsection{#2}\label{SS:#1}%
    \ifShowLabels \TeXref{{SS:#1}} \fi}

\newcommand{\refs}[1]{Section ~\ref{S:#1}}
\newcommand{\refss}[1]{Section ~\ref{SS:#1}}

\newcommand{\reft}[1]{Theorem ~\ref{T:#1}}
\newcommand{\refp}[1]{Proposition ~\ref{P:#1}}
\newcommand{\refc}[1]{Corollary ~\ref{C:#1}}

\newcommand{\refe}[1]{\eqref{E:#1}}
\newcommand{\refco}[1]{Conjecture ~\ref{Co:#1}}

\newenvironment{thm}[1]%
    { \begin{Thm} \label{T:#1}  \ifShowLabels \TeXref{T:#1} \fi }%
    { \end{Thm} }

\renewcommand{\th}[1]{\begin{thm}{#1} \sl }
\renewcommand{\eth}{\end{thm} }


\newenvironment{propos}[1]%
    { \begin{Prop} \label{P:#1}  \ifShowLabels \TeXref{P:#1} \fi }%
    { \end{Prop} }
\newcommand{\prop}[1]{\begin{propos}{#1}\sl }
\newcommand{\eprop}{\end{propos}}

\newenvironment{corol}[1]%
    { \begin{Cor} \label{C:#1}  \ifShowLabels \TeXref{C:#1} \fi }%
    { \end{Cor} }
\newcommand{\cor}[1]{\begin{corol}{#1} \sl }
\newcommand{\ecor}{\end{corol}}

\newenvironment{defeni}[1]%
    { \begin{Def} \label{D:#1}  \ifShowLabels \TeXref{D:#1} \fi }%
    { \end{Def} }
\newcommand{\defe}[1]{\begin{defeni}{#1} \sl }
\newcommand{\edefe}{\end{defeni}}


\newenvironment{conjec}[1]%
    { \begin{Conj} \label{Co:#1}  \ifShowLabels \TeXref{Co:#1} \fi }%
    { \end{Conj} }
\renewcommand{\conj}[1]{\begin{conjec}{#1} \sl }
\newcommand{\econj}{\end{conjec}}

\newcommand{\eq}[1]%
    { \ifShowLabels \TeXref{E:#1} \fi
       \begin{equation} \label{E:#1} }
\newcommand{\eeq}{ \end{equation} }




\newcommand\gam{\gamma}     \newcommand\Gam{\Gamma}

\newcommand\lam{\lambda}        \newcommand\Lam{\Lambda}



\newcommand\calM{{\mathcal{M}}}

\newcommand\calO{{\mathcal{O}}}

\newcommand\calW{{\mathcal{W}}}


        \newcommand\bfT{{\mathbf T}}


\newcommand\PP{\mathbb{P}}
\renewcommand\AA{\mathbb{A}}

\newcommand\ZZ{\mathbb{Z}}

\newcommand\CC{\mathbb{C}}


 \newcommand\grg{{\mathfrak{g}}}

 \newcommand\grt{{\mathfrak{t}}}

\newcommand\sdp{\times \hskip -0.3em {\raise 0.3ex
\hbox{$\scriptscriptstyle |$}}} 


\newcommand\End{\operatorname{End\,}}

\newcommand\Gr{\operatorname{Gr}}

\newcommand\Int{\operatorname{Int}}

\newcommand\SL{{\normalshape SL}}

\newcommand\Sym{\operatorname{Sym}}








\newcommand\hatPsi{{\widehat{\Psi}}}


\newcommand\tilR{{\widetilde{R}}}


\newcommand\x{\times}

\newcommand{\la}{\langle}

\newcommand\nc{\newcommand}

\nc\aff{\operatorname{aff}}
\nc\oGr{\overline{\Gr}}
\nc\Bun{\operatorname{Bun}}
\nc\hgrg{\widehat{\grg}}
\renewcommand\Int{\operatorname{Int}}
\nc\bInt{\overline{\Int}}
\nc\hatLam{\widehat{\Lam}}
\nc\bmu{\overline{\mu}}
\nc\bnu{\overline{\nu}}
\nc\blambda{\overline{\lam}}
\renewcommand\SL{\operatorname{SL}}
\nc\ocalW{\overline{\calW}}
\nc\pos{\operatorname{pos}}
\nc\IH{\operatorname{IH}}
\nc\Rep{\operatorname{Rep}}
\nc\Gal{\operatorname{Gal}}
\nc{\tilGr}{\widetilde{\Gr}}

\nc\Pic{\operatorname{Pic}}


\emergencystretch=2cm

\nc{\HC}{{\mathcal{HC}}}
\nc{\on}{\operatorname}
\nc{\BA}{{\mathbb{A}}}
\nc{\BC}{{\mathbb{C}}}
\nc{\BG}{{\mathbb{G}}}
\nc{\BM}{{\mathbb{M}}}
\nc{\BN}{{\mathbb{N}}}
\nc{\BQ}{{\mathbb{Q}}}
\nc{\BP}{{\mathbb{P}}}
\nc{\BR}{{\mathbb{R}}}
\nc{\BZ}{{\mathbb{Z}}}
\nc{\BS}{{\mathbb{S}}}

\nc{\CA}{{\mathcal{A}}}
\nc{\CB}{{\mathcal{B}}}
\nc{\CalC}{{\mathcal C}}
\nc{\CalD}{{\mathcal D}}
\nc{\CE}{{\mathcal{E}}}
\nc{\CF}{{\mathcal{F}}}
\nc{\CG}{{\mathcal{G}}}
\nc{\CH}{{\mathcal{H}}}
\nc{\CK}{{\mathcal{K}}}
\nc{\CL}{{\mathcal{L}}}
\nc{\CM}{{\mathcal{M}}}
\nc{\CMM}{{\mathcal{M}^{\operatorname{gen}}_\hbar(-\rho)}}
\nc{\CN}{{\mathcal{N}}}
\nc{\CO}{{\mathcal{O}}}
\nc{\CP}{{\mathcal{P}}}
\nc{\CQ}{{\mathcal{Q}}}
\nc{\CR}{{\mathcal{R}}}
\nc{\CS}{{\mathcal{S}}}
\nc{\CT}{{\mathcal{T}}}
\nc{\CU}{{\mathcal{U}}}
\nc{\CV}{{\mathcal{V}}}
\nc{\CW}{{\mathcal{W}}}
\nc{\CX}{{\mathcal{X}}}
\nc{\CY}{{\mathcal{Y}}}
\nc{\CZ}{{\mathcal{Z}}}

\nc{\gen}{{\operatorname{gen}}}
\nc{\cM}{{\check{\mathcal M}}{}}
\nc{\csM}{{\check{\mathcal A}}{}}
\nc{\obM}{{\overset{\circ}{\mathbf M}}{}}
\nc{\oCA}{{\overset{\circ}{\mathcal A}}{}}
\nc{\obA}{{\overset{\circ}{\mathbf A}}{}}
\nc{\ooM}{{\overset{\circ}{M}}{}}
\nc{\osM}{{\overset{\circ}{\mathsf M}}{}}
\nc{\vM}{{\overset{\bullet}{\mathcal M}}{}}
\nc{\nM}{{\underset{\bullet}{\mathcal M}}{}}
\nc{\obD}{{\overset{\circ}{\mathbf D}}{}}
\nc{\cp}{{\overset{\circ}{\mathbf p}}{}}
\nc{\ofZ}{{\overset{\circ}{\mathfrak Z}}{}}

\nc{\fa}{{\mathfrak{a}}}
\nc{\fb}{{\mathfrak{b}}}
\nc{\fg}{{\mathfrak{g}}}
\nc{\fgl}{{\mathfrak{gl}}}
\nc{\fh}{{\mathfrak{h}}}
\nc{\fj}{{\mathfrak{j}}}
\nc{\fm}{{\mathfrak{m}}}
\nc{\fn}{{\mathfrak{n}}}
\nc{\fu}{{\mathfrak{u}}}
\nc{\fp}{{\mathfrak{p}}}
\nc{\frr}{{\mathfrak{r}}}
\nc{\fs}{{\mathfrak{s}}}
\nc{\ft}{{\mathfrak{t}}}
\nc{\fT}{{\mathfrak{T}}}
\nc{\ofT}{{\overline{\mathfrak T}}}
\nc{\ofS}{{\overline{\mathfrak S}}}
\nc{\fsl}{{\mathfrak{sl}}}
\nc{\hsl}{{\widehat{\mathfrak{sl}}}}
\nc{\hgl}{{\widehat{\mathfrak{gl}}}}
\nc{\hg}{{\widehat{\mathfrak{g}}}}
\nc{\chg}{{\widehat{\mathfrak{g}}}{}^\vee}
\nc{\hn}{{\widehat{\mathfrak{n}}}}
\nc{\chn}{{\widehat{\mathfrak{n}}}{}^\vee}

\nc{\fA}{{\mathfrak{A}}}
\nc{\fB}{{\mathfrak{B}}}
\nc{\fD}{{\mathfrak{D}}}
\nc{\fE}{{\mathfrak{E}}}
\nc{\fF}{{\mathfrak{F}}}
\nc{\fG}{{\mathfrak{G}}}
\nc{\fI}{{\mathfrak{I}}}
\nc{\fJ}{{\mathfrak{J}}}
\nc{\fK}{{\mathfrak{K}}}
\nc{\fL}{{\mathfrak{L}}}
\nc{\fM}{{\mathfrak{M}}}
\nc{\fN}{{\mathfrak{N}}}
\nc{\frP}{{\mathfrak{P}}}
\nc{\fQ}{{\mathfrak Q}}
\nc{\fS}{{\mathfrak S}}
\nc{\fU}{{\mathfrak{U}}}
\nc{\fZ}{{\mathfrak{Z}}}

\nc{\bb}{{\mathbf{b}}}
\nc{\bc}{{\mathbf{c}}}
\nc{\be}{{\mathbf{e}}}
\nc{\bj}{{\mathbf{j}}}
\nc{\bn}{{\mathbf{n}}}
\nc{\bp}{{\mathbf{p}}}
\nc{\bq}{{\mathbf{q}}}
\nc{\bv}{{\mathbf{v}}}
\nc{\bx}{{\mathbf{x}}}
\nc{\by}{{\mathbf{y}}}
\nc{\bw}{{\mathbf{w}}}
\nc{\bA}{{\mathbf{A}}}
\nc{\bB}{{\mathbf{B}}}
\nc{\bC}{{\mathbf{C}}}
\nc{\bK}{{\mathbf{K}}}
\nc{\bD}{{\mathbf{D}}}
\nc{\bH}{{\mathbf{H}}}
\nc{\bM}{{\mathbf{M}}}
\nc{\bN}{{\mathbf{N}}}
\nc{\bO}{{\mathbf{O}}}
\nc{\bQ}{{\mathbf Q}}
\nc{\bS}{{\mathbf{S}}}
\nc{\bT}{{\mathbf{T}}}
\nc{\bV}{{\mathbf{V}}}
\nc{\bW}{{\mathbf{W}}}
\nc{\bX}{{\mathbf{X}}}
\nc{\bP}{{\mathbf{P}}}
\nc{\bZ}{{\mathbf{Z}}}

\nc{\sA}{{\mathsf{A}}}
\nc{\sB}{{\mathsf{B}}}
\nc{\sC}{{\mathsf{C}}}
\nc{\sD}{{\mathsf{D}}}
\nc{\sF}{{\mathsf{F}}}
\nc{\sK}{{\mathsf{K}}}
\nc{\sM}{{\mathsf{M}}}
\nc{\sO}{{\mathsf{O}}}
\nc{\sQ}{{\mathsf{Q}}}
\nc{\sP}{{\mathsf{P}}}
\nc{\sV}{{\mathsf{V}}}
\nc{\sW}{{\mathsf{W}}}
\nc{\sZ}{{\mathsf{Z}}}
\nc{\sfp}{{\mathsf{p}}}
\nc{\sr}{{\mathsf{r}}}
\nc{\st}{{\mathsf{t}}}
\nc{\sfb}{{\mathsf{b}}}
\nc{\sfc}{{\mathsf{c}}}
\nc{\sd}{{\mathsf{d}}}
\nc{\sg}{{\mathsf{g}}}
\nc{\sfl}{{\mathsf{l}}}

\nc{\BK}{{\bar{K}}}

\nc{\tA}{{\widetilde{\mathbf{A}}}}
\nc{\tB}{{\widetilde{\mathcal{B}}}}
\nc{\tg}{{\widetilde{\mathfrak{g}}}}
\nc{\tG}{{\widetilde{G}}}
\nc{\TM}{{\widetilde{\mathbb{M}}}{}}
\nc{\tO}{{\widetilde{\mathsf{O}}}{}}
\nc{\tU}{{\widetilde{\mathfrak{U}}}{}}
\nc{\TZ}{{\tilde{Z}}}
\nc{\tZ}{\widetilde{Z}{}}
\nc{\tx}{{\tilde{x}}}
\nc{\tbv}{{\tilde{\bv}}}
\nc{\tfP}{{\widetilde{\mathfrak{P}}}{}}
\nc{\tz}{{\tilde{\zeta}}}
\nc{\tmu}{{\tilde{\mu}}}

\nc{\td}{\ddot{\underline{d}}{}}
\nc{\tzeta}{\widetilde{\zeta}{}}
\nc{\hd}{{\widehat{\underline{d}}}}
\nc{\hG}{{\widehat{G}}}
\nc{\hBP}{\widehat{\mathbb P}{}}
\nc{\hQ}{{\widehat{Q}}}
\nc{\hsM}{\widehat{\mathsf M}{}}
\nc{\hfM}{\widehat{\mathfrak M}{}}
\nc{\hCP}{\widehat{\mathcal P}{}}
\nc{\hCR}{\widehat{\mathcal R}{}}
\nc{\hCS}{{\widehat{\mathcal S}}}
\nc{\hfZ}{\widehat{\mathfrak Z}{}}

\nc{\urho}{\underline{\rho}}
\nc{\uB}{\underline{B}}
\nc{\uC}{{\underline{\mathbb{C}}}}
\nc{\ui}{\underline{i}}
\nc{\ofP}{{\overline{\mathfrak{P}}}}

\nc{\hrho}{{\hat{\rho}}}

\nc{\unl}{\underline}
\nc{\ol}{\overline}
\nc{\one}{{\mathbf{1}}}
\nc{\two}{{\mathbf{t}}}

\nc{\Tot}{{\mathop{\operatorname{\normalshape Tot}}}}
\nc{\Hilb}{{\mathop{\operatorname{\normalshape Hilb}}}}
\nc{\CHom}{{\mathop{\operatorname{{\mathcal{H}}\it om}}}}
\nc{\defi}{{\mathop{\operatorname{\normalshape def}}}}
\nc{\length}{{\mathop{\operatorname{\normalshape length}}}}

\nc{\Cliff}{{\mathsf{Cliff}}}
\nc{\Fl}{{\mathsf{Fl}}}
\nc{\Fib}{{\mathsf{Fib}}}
\nc{\Coh}{{\mathsf{Coh}}}
\nc{\FCoh}{{\mathsf{FCoh}}}

\nc{\reg}{{\text{\normalshape reg}}}

\nc{\cplus}{{\mathbf{C}_+}}
\nc{\cminus}{{\mathbf{C}_-}}
\nc{\cthree}{{\mathbf{C}_*}}
\nc{\Qbar}{{\bar{Q}}}

\nc{\bh}{{\bar{h}}}
\nc{\bOmega}{{\overline{\Omega}}}
\nc\tGr{\widetilde{\Gr}}

\nc{\seq}[1]{\stackrel{#1}{\sim}}
\nc\ogu{\overline{G/U}}
\nc\chlam{\check{\lam}}

\nc\St{\operatorname{St}}

\nc\uS{\underline{S}}
\nc\QM{\mathcal{QM}}
\newcommand{\oQM}{\vphantom{j^{X^2}}\smash{\overset{\circ}{\vphantom{\vstretch{0.7}{A}}\smash{\QM}}}}
\nc{\chmu}{\check{\mu}}
%
%
%
%
%

\begin{document}
\title{Weyl modules and $q$-Whittaker functions}
\dedication{To the memory of Andrei Zelevinsky who taught us the beauty of
symmetric functions}
\author{Alexander Braverman\inst{1} \and Michael Finkelberg\inst{2}}
\institute{Department of Mathematics, Brown University,
151 Thayer St., Providence RI 02912, USA, \email{braval@math.brown.edu} \and
IMU, IITP and National Research University
Higher School of Economics
Department of Mathematics, 20 Myasnitskaya st, Moscow 101000 Russia,
\email{fnklberg@gmail.com}}
\date{Received: / Revised version: }
\maketitle
\begin{abstract}
Let $G$ be a semi-simple simply connected group over $\CC$. Following \cite{GLOI} we use the $q$-Toda integrable system obtained by quantum group version
of the
Kostant-Whittaker reduction (cf. \cite{Et} and \cite{Sev}) to define the notion of $q$-Whittaker functions $\Psi_{\chlam}(q,z)$.
This is a family of invariant polynomials on the maximal torus $T\subset G$ (here $z\in T$) depending
on a dominant weight $\check\lam$ of $G$ whose coefficients are rational functions in a variable $q\in \CC^*$.
For a conjecturally the same (but a priori different) definition of the $q$-Toda system these functions were studied
by B.~Ion in \cite{Ion} and by I.~Cherednik in \cite{C} (we shall denote the $q$-Whittaker functions from \cite{C}
by $\Psi'_{\chlam}(q,z)$). For $G=SL(N)$ these functions were extensively studied in \cite{GLOI}-\cite{GLOIII}.

We show that when $G$ is simply laced, the function $\hatPsi_{\check\lam}(q,z)=\Psi_{\chlam}(q,z)\cdot{\prod\limits_{i\in I}\prod\limits_{r=1}^{\langle\alpha_i,\check\lambda\rangle}(1-q^r)}$
(here $I$ denotes the set of vertices of the Dynkin diagram
of $G$) is equal to the character of a certain finite-dimensional $G[[\st]]\rtimes\CC^*$-module $D(\check\lam)$
(the Demazure module). When $G$ is not simply laced a twisted version of the above statement holds. This result is known for $\Psi_{\chlam}$ replaced by $\Psi'_{\chlam}$
(cf. \cite{San} and \cite{Ion}); however our proofs are algebro-geometric (and rely on our previous work \cite{BF11}) and thus they are completely
different from \cite{San} and
\cite{Ion} (in particular, we give an apparently new algebro-geometric interpretation of the modules $D(\chlam)$).
\subclass{19E08}
\end{abstract}

\sec{int}{Introduction}
\ssec{}{The $q$-Whittaker functions}Let $G$ be a semi-simple, simply connected group over $\CC$ with Lie algebra
$\grg$; we choose a pair of opposite Borel subgroups $B, B_-$ of $G$ with unipotent radicals
$U, U_-$; the intersection $B\cap B_-$ is a maximal torus $T$ of $G$. It will be convenient for us to denote
the weight lattice of $T$ by $\check \Lam$ and the coweight lattice by $\Lam$. In this paper
we study certain invariant polynomials $\Psi_{\chlam}(q,z)$ on $T$ (the invariance is with respect
to the Weyl group $W$ of $G$). Here $z\in T$, $q\in \CC^*$ and $\check\lam:T\to \CC^*$ is a dominant weight of $G$. The function
 $\Psi_{\chlam}(q,z)$ is a polynomial function of $z$ with coefficients which are rational functions of $q$ (in fact, later
 were are going to work with a certain modification $\hatPsi_{\chlam}(q,z)$ of $\Psi_{\chlam}(q,z)$ which will be polynomial in
$q$).

The definition of $\Psi_{\chlam}(q,z)$ is as follows. Let $\check{G}$ denote the Langlands dual
group of $G$ with its maximal torus $\check{T}$. In \cite{Et} and \cite{Sev} the authors define (by adapting the so called
Kostant-Whittaker reduction to the case of quantum groups) a homomorphism
$\calM:\CC[T]^W\to \End_{\CC(q)}\CC(q)[\check T]$ called the quantum difference Toda integrable system associated with
$\check G$.
For each $f\in \CC[T]^W$ the operator $\calM_f:=\calM(f)$ is indeed a difference operator: it is a $\CC(q)$-linear combination of
shift operators $\bfT_{\check\beta}$ where $\check\beta\in\check\Lam$ and
$$
\bfT_{\check\beta}(F(x))=F(q^{\check\beta}x).
$$

\noindent
{\bf Remark.} In principle the constructions of \cite{Et} and \cite{Sev} depend on a choice of orientation of the Dynkin diagram of
$\check{G}$; however one can deduce from the main result of \cite{ferm} that the resulting homomorphism is independent of this choice.

\medskip
\noindent
In particular, the above operators can be restricted to operators acting in the space of functions on the lattice
$\check\Lam$ by means
of the embedding $\check\Lam\hookrightarrow \check T$ sending every $\chlam$ to $q^{\chlam}$. For any $f\in \CC[T]^W$
we shall
denote the corresponding operator by $\calM_f^{\on{lat}}$. The following conjecture should probably be not very difficult; however, at the moment we don't know how to prove it:
\conj{whit-exist-co}
\begin{enumerate}
\item
There exists a unique collection of $\CC(q)$-valued polynomials $\Psi_{\chlam}(q,z)$ on $T$ satisfying the following properties:

a) $\Psi_{\chlam}(q,z)=0$ if $\chlam$ is not dominant.

b) $\Psi_{0}(q,z)=1$.

c) Let us consider all the functions $\Psi_{\chlam}(q,z)$ as one function $\Psi(q,z):\check \Lam\to \CC(q)$ depending
on $z\in  T$. Then for every $f\in \CC[T]^W$ we have
$$
\calM^{\on{lat}}_f(\Psi(q,z))=f(z)\Psi(q,z).
$$
\item The polynomials $\Psi_{\chlam}(q,z)$ are $W$-invariant.
\end{enumerate}
\econj
Of course, the second statement follows from the ``uniqueness" part of the first.

Some remarks about the literature are necessary here. First of all, \refco{whit-exist-co} is easy for $G=SL(N)$. In this case, the functions
$\Psi_{\chlam}(q,z)$ are extensively studied in \cite{GLOI}-\cite{GLOIII}. Second, for general $G$ there exists another definition of the
$q$-Toda system using double affine Hecke algebras, studied for example in \cite{C}. Since it is not clear to us
how to prove that the definition of $q$-Toda from \cite{C} and the definition of \cite{Et} and \cite{Sev} are the same,
we shall denote the operators from \cite{C} by $\calM_f'$. It is easy to see
that $\calM_f=\calM_f'$ for $G=SL(N)$.\footnote{In fact, as we are going to explain later, the results of this
paper together with the results of \cite{Ion} imply that $\calM_f=\calM_f'$
for any $G$, but we would like to have a more direct proof of this fact.}
Similarly we shall denote by $(\calM^{\on{lat}}_f)'$ their ``lattice'' version. Then it is shown in \cite{C} that the existence part of
\refco{whit-exist-co} holds for any $G$ if the operators $\calM^{\on{lat}}_f$ are replaced by $(\calM^{\on{lat}}_f)'$.
We shall denote the corresponding polynomials by $\Psi_{\chlam}'(q,z)$.

The main result of this paper  will imply the following:
\th{whit-exist}
\begin{enumerate}
\item
There exists a collection of $W$-invariant polynomials $\Psi_{\chlam}(q,z)$ on $T$ with coefficients in $\CC(q)$ satisfying a), b) and c) above.
\item
Let $\hatPsi_{\check\lam}(q,z)=\Psi_{\chlam}(q,z)\cdot{\prod\limits_{i\in I}\prod\limits_{r=1}^{\langle\alpha_i,\check\lambda\rangle}(1-q^r)}$. Then $\hatPsi_{\chlam}(q,z)$ is a polynomial
    function on $\AA^1\x  T$.
\end{enumerate}
\eth
We are going to construct the above polynomials explicitly by algebro-geometric
means. Thus we prove the existence part of~\refco{whit-exist-co}.

We shall usually refer to the polynomials $\Psi_{\chlam}$ and $\hatPsi_{\chlam}$ as $q$-Whittaker functions (following
\cite{GLOI}-\cite{GLOIII}).
It is not difficult to see that
$$
\lim\limits_{q\to 0}\Psi_{\chlam}=\lim\limits_{q\to 0}\hatPsi_{\chlam}=\chi(L(\chlam))
$$
where $\chi(L(\lam))$ stands for the character of the irreducible representation $L(\chlam)$ of $G$ with highest
weight $\chlam$.

The main purpose of this paper is to give several (algebro-geometric and representation-theoretic) interpretations of
the functions $\Psi_{\chlam}$ and $\hatPsi_{\chlam}$; as a byproduct we shall show that $\hatPsi_{\chlam}(q,z)$ is {\em positive},
i.e. it is a linear combination of the functions $\chi(L(\chmu))$ with coefficients in $\ZZ_{\geq 0}[q]$ (this also implies that
$\Psi_{\chlam}$ is a linear combination of the $\chi(L(\chmu))$'s with coefficients in $\ZZ_{\geq 0}[[q]]$). All of our results are known
for the polynomials $\Psi_{\chlam}'$  (and thus, in particular, we can show that $\Psi_{\chlam}=\Psi_{\chlam}'$) due to
\cite{C} and~\cite{San},~\cite{Ion} but our proofs are totally different from {\em loc. cit.}
\ssec{}{Weyl modules}
Recall the notion of Weyl $\fg[\st]$-module $\CW(\check\lambda)$
for dominant $\check\lambda\in\Lambda^\vee_+$, see e.g.~\cite{CFK}.
It is the maximal $G$-integrable $\fg[\st]$-quotient module of
$\on{Ind}_{\fu[\st]\oplus\ft}^{\fg[\st]}\BC_{\check\lambda}$
where $\fu\subset \fg$ is the nilpotent radical of a Borel subalgebra, containing $\grt$. There is also a natural
notion of {\em dual Weyl module} $\calW(\chlam)^{\vee}$ (one has to replace the induction by coinduction and ``quotient module"
by ``submodule"). Both $\calW(\chlam)$ and $\calW(\chlam)^{\vee}$ are endowed with a natural action of
$\CC^*$ by ``loop rotation". When restricted to $G\x \CC^*$ the module
$\calW(\chlam)$ becomes a direct sum of finite-dimensional representations and the character $\chi(\calW(\chlam))$ makes sense;
moreover it is a linear combination of $\chi(L(\chmu))$'s with coefficients in $\ZZ_{\geq 0}[[q]]$. Also we have
$\chi(\calW(\chlam))=\chi(\calW(\chlam)^{\vee})$.

Let $\AA^{\chlam}$ denote the space of all formal linear combinations $\sum\gam_i x_i$ where
$x_i\in \AA^1$ and $\gam_i$ are dominant weights of $G$ such that $\sum \gam_i=\chlam$.
The character of $\CC[\AA^{\chlam}]$ with respect to the natural action of
$\CC^*$ is equal to ${\prod\limits_{i\in I}\prod\limits_{r=1}^{\langle\alpha_i,\check\lambda\rangle}(1-q^r)}$. According to~\cite{CFK} there exists
an action of $\CC[\AA^{\chlam}]$ on $\calW(\chlam)$ such that

1) This action commutes with $G[\st]\rtimes \CC^*$;

2) $\calW(\chlam)$ is finitely generated and free
over $\CC[\AA^{\chlam}]$.

Let $D(\chlam)$ be the fiber of $\calW(\chlam)$ at $\chlam\cdot 0\in \AA^{\chlam}$. This module is called
a Demazure module (for reasons explained in~\cite{CL} and~\cite{FL}).
This is a finite-dimensional $G[\st]\rtimes \CC^*$-module
(in fact, it is easy to see that the action of $G[\st]$ on $D(\chlam)$ extends to an action of $G[[\st]]$).
We are going to prove the following

\th{demazure}Assume that $G$ is simply laced. Then
\begin{enumerate}
\item
\eq{dem-1}
\chi(\calW(\chlam))=\Psi_{\chlam}(q,z)
\end{equation}
\item
\eq{dem-2}
\chi(D(\chlam))=\hatPsi_{\chlam}(q,z).
\end{equation}
\end{enumerate}
In particular, $\hatPsi_{\chlam}(q,z)$ is positive in the sense discussed above.
\eth
When $G$ is not simply laced, the above result is still true, if one replaces $G[[\st]]$ by some twisted (in the sense of Kac-Moody groups) version
of it; we shall not give the details here (cf. \refss{non-simp} for a discussion of the non-simply laced case).

\reft{demazure}(2) is proved in \cite{Ion} for $\hatPsi_{\chlam}'$ instead of $\hatPsi_{\chlam}$.
\footnote{It is important to emphasize that the definition of Demazure modules used in this paper (as fibers of Weyl modules)
is not obviously equivalent to the standard definition used in \cite{Ion}; however, the equivalence of the two definitions
is proved in~\cite{CL} in type $A$, and in~\cite{FL} in general.}
Thus \reft{demazure} together with \cite{Ion} imply the following
\cor{twotoda}
Assume that $G$ is simply laced. Then we have $\hatPsi_{\chlam}'=\hatPsi_{\chlam}$. Hence
for any $f\in \CC[T]^W$ we have $\calM_f=\calM'_f$.
\ecor
As was mentioned earlier we would like to have a more direct proof of this result
(independent of the results of \cite{Ion} and this paper).
We would also like to emphasize that our proof of \reft{demazure} is geometric (in fact it follows easily from the main
result of \cite{BF11}) and thus it is quite different from the proof in \cite{Ion}. Also, \refc{twotoda} is
wrong if $G$ is not simply laced, cf.~\refss{non-simp}.
\ssec{}{Geometric interpretation and spaces of (quasi-)maps}
To prove \reft{demazure} it is clearly enough to prove \refe{dem-1}.
This will be done by interpreting both the LHS and the RHS in terms of algebraic geometry.

Let us first do it for the LHS. The quotient $G[[\st]]/T\cdot U_-[[\st]]$
can naturally be regarded as a scheme
over $\CC$. Any weight $\chlam$ defines a $G[[\st]]\rtimes \CC^*$-equivariant line bundle on this scheme in the standard way.
We shall prove
\th{weyl}
There is a natural isomorphism $\Gam(G[[\st]]/T\cdot U_-[[\st]],\calO(\chlam))\simeq \calW(\chlam)^{\vee}$. Similarly,
$\Gam(G[[\st]]/B_-[[\st]],\calO(\chlam))\simeq D(\chlam)^{\vee}$.
\eth
{\bf Remark.} \reft{weyl} is not difficult; it can be thought of as an analog of Borel-Weil-Bott theorem for $G[[\st]]$. Let us
also stress, that while the dual Weyl module $\calW(\chlam)^{\vee}$ has a natural action of $G[[\st]]$, the Weyl module
$\calW(\chlam)$ itself only has an action of $G[\st]$.

\medskip
\noindent
On the other hand, there is a well known connection between the quotient $G[[\st]]/T\cdot U_-[[\st]]$ and the space of based maps
$\PP^1\to G/B$. Moreover, in \cite{BF11} we have given a construction of the universal eigen-function of the operators
$\calM_f$ via the geometry of the above spaces of maps. Using this construction, we can obtain \refe{dem-1} from \reft{weyl}
by a (simple) sequence of formal manipulations. Technically, in order to perform this we shall need to consider a
compactification of the space of maps by the corresponding space of quasi-maps.
\ssec{non-simp}{The case of non-simply laced $G$}
Formally, the above results do not hold when $G$ is not simply laced. However, it is
easy to adjust all the results to the non-simply laced case following~Section~7 of~\cite{BF11}; in particular, in the non-simply laced case the
functions $\Psi_{\chlam}$ and $\hatPsi_{\chlam}$ should be interpreted as
the characters of global (resp. local) Weyl modules for the distinguished
maximal parahoric subalgebra in a certain twisted affine algebra corresponding
to $\fg$ (cf.~Section~7 of~\cite{BF11} for more detail). The relevant theory
of Weyl modules and their relation to Demazure
modules in the twisted case is developed in \cite{FoKu}.
On the other hand, the character of {\em nontwisted} local Weyl modules are
identified with $\hatPsi'_{\chlam}$ in~\cite{Len}.
\ssec{}{Plan of the paper} This paper is organized as follows.
 In \refs{2} we discuss certain line
bundles on the space of (quasi-)maps and relate those to sections of a line bundle on $G[[\st]]/T\cdot U_-[[\st]]$.
\refs{3} is devoted to the proof of certain cohomology vanishing on the space of quasi-maps.
In \refs{4} we give an interpretation of $\Psi_{\chlam}$ via quasi-maps. Finally in \refs{5} we give a proof of
\reft{demazure}.
\sec{2}{Quasimaps' scheme}

We follow the notations of~\cite{BF11}, unless specified otherwise.

\ssec{ind}{Ind-scheme $\fQ$}
Given $\beta\geq\alpha\in\Lambda_+$ (the cone of positive integral combinations
of the simple coroots) we consider the closed embedding
$\varphi_{\alpha,\beta}:\ \QM_\fg^\alpha\hookrightarrow\QM_\fg^\beta$ adding the
defect $(\beta-\alpha)\cdot0$ at the point $0\in\bC$. We denote by
$\fQ$ the direct limit of this system.

Recall that $V_{\check\omega_i},\ i\in I$, are the fundamental $\fg$-modules,
and $\QM^\alpha_\fg$ is equipped with a closed embedding
$\psi_\alpha:\ \QM^\alpha_\fg\hookrightarrow
\prod_{i\in I}\BP\Gamma(\bC,V_{\check\omega_i}\otimes\CO(\langle\alpha,
\check\omega_i\rangle))$.
Given a $\fg$-weight $\check\lambda=
\sum_{i\in I}d_i\check\omega_i\in\Lambda^\vee$ we define a line bundle
$\CO(\check\lambda)^\alpha$ on $\QM^\alpha_\fg$ as
$\psi_\alpha^*\bigotimes_{i\in I}\CO(d_i)$.
Note that if $\check\lambda$ is dominant, i.e. $d_i\geq0\ \forall i$,
then $\CO(\check\lambda)^\alpha$ is the inverse image of $\CO(1)$ on
$\BP\Gamma(\bC,V_{\check\lambda}\otimes\CO(\langle\alpha,
\check\lambda\rangle))$ under the natural morphism
$\QM^\alpha_\fg\to\BP\Gamma(\bC,V_{\check\lambda}\otimes\CO(\langle\alpha,
\check\lambda\rangle))$.
Clearly,
$\varphi_{\alpha,\beta}^*\CO(\check\lambda)^\beta\simeq\CO(\check\lambda)^\alpha$.
The resulting line bundle on the ind-scheme $\fQ$ is denoted
$\CO(\check\lambda)$.

\ssec{pro}{Infinite type scheme $\bQ$}
We denote $\BC[[t^{-1}]]$ by $R$, and $\BC((t^{-1}))$ by $F$. Recall that
$R_n=R/(t^{-n})$. We denote the projection $R\twoheadrightarrow R_n$ by $p_n$.
For a $\BC$-algebra $A$, the $A$-points of the infinite type scheme 
$\overline{G/U_-}(R)$ are the collections of
vectors $v_{\check\lambda}\in V_{\check\lambda}\otimes A[[t^{-1}]],\ \check\lambda\in
\Lambda^\vee_+$ (dominant $\fg$-weights), satisfying the Pl\"ucker equations.
We denote by $\underline{\widehat\bQ}\subset\overline{G/U_-}(R)$ the open subscheme
formed by all the maps $\on{Spec}R\to\overline{G/U_-}$ whose restriction
to the generic point of $\on{Spec}R$ lands into
$G/U_-\subset\overline{G/U_-}(R)$. According to~\cite{fema},
$\underline{\widehat\bQ}$ is {\em not reduced} in general. We define 
$\widehat\bQ:=(\underline{\widehat\bQ})_{\operatorname{red}}$.
It is equipped with a free action
of the Cartan torus $T:\ h(v_{\check\lambda})=\check\lambda(h)v_{\check\lambda}$.
The quotient scheme $\underline\bQ=\underline{\widehat\bQ}/T$ is a closed 
subscheme in
$\prod_{i\in I}\BP(V_{\check\omega_i}\otimes R)$. We define 
$\bQ:=(\underline\bQ)_{\operatorname{red}}$. Any weight $\check\lambda\in
\Lambda^\vee$ gives rise to a line bundle $\CO(\check\lambda)$ on $\bQ$.

\ssec{indpro}{The embedding $\fQ\hookrightarrow\bQ$}
We fix a coordinate $t$ on $\bC$ such that $t(0)=0,\ t(\infty)=\infty$.
For $\alpha\in\Lambda_+$ we define a $T$-torsor
$\widehat\QM{}^\alpha_\fg\stackrel{p}{\to}\QM^\alpha_\fg$\footnote{Recall that
$\QM^\alpha_\fg$ is {\em reduced} by definition: it is defined as the moduli
scheme cut out by Pl\"ucker relations made reduced.} as follows.
The $\BC$-points of $\widehat\QM{}^\alpha_\fg$ are the collections
$(v_{\check\lambda}\in\CL_{\check\lambda}\subset V_{\check\lambda}\otimes\CO_\bC),\
\check\lambda\in\Lambda^\vee_+$, such that

(a)
$(\CL_{\check\lambda}\subset
V_{\check\lambda}\otimes\CO_\bC)_{\check\lambda\in\Lambda^\vee_+}\in\QM^\alpha_\fg$;
(b) $v_{\check\lambda}\in\Gamma(\bC-0,\CL_{\check\lambda})$ are the
nonvanishing sections satisfying the Pl\"ucker equations.

The projection $p$ forgets the sections $v_{\check\lambda}$.
The action of $T$ on $\widehat\QM{}^\alpha_\fg$ is defined as follows:
$h(v_{\check\lambda}\in\CL_{\check\lambda})=
(\check\lambda(h)v_{\check\lambda}\in\CL_{\check\lambda})$.

Taking a formal expansion of $v_{\check\lambda}$ at $\infty\in\bC$ we obtain
a closed embedding $s_\alpha:\ \widehat\QM{}^\alpha_\fg\hookrightarrow
\widehat\bQ$. Clearly, $s_\alpha$ is $T$-equivariant, and gives rise to the
same named closed embedding $s_\alpha:\ \QM^\alpha_\fg\hookrightarrow\bQ$.
Evidently, for $\beta\geq\alpha$ we have
$s_\alpha=s_\beta\circ\varphi_{\alpha,\beta}$. Hence we obtain the closed
embedding $s:\ \fQ\hookrightarrow\bQ$. The restriction of the line bundle
$\CO(\check\lambda)$ on $\bQ$ to $\fQ$ coincides with the line bundle
$\CO(\check\lambda)$ on $\fQ$.

\ssec{open}{Open subschemes $\fQ_\infty\subset\fQ$ and $\bQ_\infty\subset\bQ$}
We define an open subscheme
$\oQM{}^\alpha_\fg\subset\QM^\alpha_\fg$ formed by all the
quasimaps without defect at $\infty\in\bC$. Clearly,
$\varphi_{\alpha,\beta}(\oQM{}^\alpha_\fg)\subset
\oQM{}^\beta_\fg$. The direct limit of this system is denoted by
$\fQ_\infty$; it is an open sub ind-scheme of $\fQ$.

Note that $s(\fQ_\infty)\subset G(R)/T\cdot U_-(R)\subset\bQ$. We are going to denote the
open subscheme $G(R)/T\cdot U_-(R)\subset\bQ$ by $\bQ_\infty$. For $n\geq1$,
we have a natural projection $p_n:\ \bQ_\infty\to G/U_-(R_n)/T=:\bQ_n$.

\begin{lemma}
\label{codim2}
The restriction
$\Gamma(\fQ,\CO(\check\lambda))\to\Gamma(\fQ_\infty,\CO(\check\lambda))$
is an isomorphism for any $\check\lambda\in\Lambda^\vee$.
\end{lemma}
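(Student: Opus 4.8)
The plan is to reduce the statement to a finite‑type, codimension‑two extension problem. Since $\fQ=\varinjlim_\alpha\QM^\alpha_\fg$ and $\fQ_\infty=\varinjlim_\alpha\overset{\circ}\QM{}^\alpha_\fg$ along the closed embeddings $\varphi_{\alpha,\beta}$, the sections of a line bundle on each ind‑scheme are by definition compatible systems, i.e. inverse limits:
$$\Gamma(\fQ,\CO(\check\lambda))=\varprojlim_\alpha\Gamma(\QM^\alpha_\fg,\CO(\check\lambda)^\alpha),\qquad \Gamma(\fQ_\infty,\CO(\check\lambda))=\varprojlim_\alpha\Gamma(\overset{\circ}\QM{}^\alpha_\fg,\CO(\check\lambda)^\alpha),$$
with transition maps $\varphi_{\alpha,\beta}^*$, and the restriction map of the lemma is the limit of the finite‑level restrictions $\Gamma(\QM^\alpha_\fg,\CO(\check\lambda)^\alpha)\to\Gamma(\overset{\circ}\QM{}^\alpha_\fg,\CO(\check\lambda)^\alpha)$. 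These restrictions are compatible with the $\varphi_{\alpha,\beta}^*$, so it suffices to prove that each finite‑level restriction is an isomorphism; the inverse limit of isomorphisms is then an isomorphism.

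Fix $\alpha$ and consider the open embedding $j\colon\overset{\circ}\QM{}^\alpha_\fg\hookrightarrow\QM^\alpha_\fg$, whose complement $Z$ is the locus of quasimaps carrying a nonzero defect at the fixed point $\infty\in\bC$. Injectivity of $j^*$ is immediate: once we know $Z$ has positive codimension, $\overset{\circ}\QM{}^\alpha_\fg$ is dense in the integral scheme $\QM^\alpha_\fg$, so a section vanishing on it vanishes. For surjectivity I would invoke the standard Hartogs‑type extension: if $X$ is integral and satisfies Serre's condition $S_2$, $L$ is a line bundle, and $\codim_X(X\setminus U)\geq 2$, then $\Gamma(X,L)\to\Gamma(U,L)$ is an isomorphism; equivalently $H^0_Z(X,L)=H^1_Z(X,L)=0$, which holds because $\on{depth}_Z L=\on{depth}_Z\CO_X\geq 2$. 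Thus the two things to establish are the codimension bound on $Z$ and the $S_2$ property of $\QM^\alpha_\fg$.

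For the codimension bound I would use the defect stratification. A quasimap without defect at $\infty$ is a genuine map near $\infty$; in codimension one the boundary of the honest‑map locus consists of quasimaps with a single defect $\alpha_i$ (a simple coroot) at \emph{one variable} point of $\bC$, which is a divisor precisely because the defect point is free to move along $\bC$. Pinning the defect to the fixed point $\infty$ removes this one‑dimensional freedom, so the locus of quasimaps with defect $\alpha_i$ at $\infty$ already has codimension two; deeper defects only increase it (the stratum with defect $\gamma$ at $\infty$ has codimension $2\langle\gamma,\check\rho\rangle$, minimized at $2\langle\alpha_i,\check\rho\rangle=2$). Hence $Z$, the union over $\gamma>0$ of the loci with defect $\gamma$ at $\infty$, has codimension $\geq 2$ in $\QM^\alpha_\fg$.

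The main obstacle, and the one genuinely imported input, is the regularity of $\QM^\alpha_\fg$: I need it to be $S_2$ (e.g. normal, or Cohen–Macaulay), which is not automatic since these quasimap spaces are singular. This is where I would appeal to the known structure of the spaces from \cite{BF11} together with the general theory of Drinfeld's quasimaps (Zastava spaces), which supplies normality and in fact rational singularities. Granting this, $\on{depth}_Z\CO_{\QM^\alpha_\fg}\geq\codim_{\QM^\alpha_\fg}Z\geq 2$, so the local cohomology groups $H^0_Z$ and $H^1_Z$ of $\CO(\check\lambda)^\alpha$ vanish, $j^*$ is an isomorphism at each finite level, and passing to the inverse limit proves the lemma.
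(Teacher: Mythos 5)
Your proposal is correct and follows essentially the same route as the paper: reduce to the finite-level restriction $\Gamma(\QM^\alpha_\fg,\CO(\check\lambda))\to\Gamma(\overset{\circ}\QM{}^\alpha_\fg,\CO(\check\lambda))$, observe that the complement of $\overset{\circ}\QM{}^\alpha_\fg$ has codimension two, and conclude by algebraic Hartogs using normality of $\QM^\alpha_\fg$, which the paper likewise deduces from the \'etale-local product structure $Z^\alpha_\fg\times\CB_\fg$ and the normality of Zastava spaces proved in \cite{BF11}. Your extra detail (the defect-stratification codimension count and the $S_2$/local-cohomology phrasing) only makes explicit what the paper asserts.
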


\begin{proof}
It suffices to prove that the restriction
$\Gamma(\QM^\alpha_\fg,\CO(\check\lambda))\to
\Gamma(\oQM{}^\alpha_\fg,\CO(\check\lambda))$ is an isomorphism
for any $\alpha\in\Lambda_+$. Since the complement of
$\oQM{}^\alpha_\fg$ in $\QM^\alpha_\fg$ has codimension two, it
suffices to know that $\QM^\alpha_\fg$ is normal. However, locally in the
\'etale topology, $\QM^\alpha_\fg$ is isomorphic to the product of the
Zastava space $Z^\alpha_\fg$ and the flag variety $\CB_\fg$. Finally, the
normality of $Z^\alpha_\fg$ is proved in~\cite[Corollary~2.10]{BF11}.
\end{proof}

The following conjecture is not needed in this paper, but it might be useful for future purposes.
\conj{codim two}
The restriction $\Gamma(\bQ,\CO(\check\lambda))\to
\Gamma(\bQ_\infty,\CO(\check\lambda))$ is an isomorphism
for any $\check\lambda\in\Lambda^\vee$.
\econj

Let us make a few remarks about \refco{codim two}.
As in the proof of~Lemma~\ref{codim2}, it suffices to know that the scheme
$\bQ$ is normal. According to~\cite{D},~\cite{GK}, the formal completion of
$\bQ$ at a closed point $x\in\bQ$ is isomorphic to the product of the formal
completion of a certain $\QM^\alpha_\fg$ at a closed point
$\phi\in\QM^\alpha_\fg$, and countably many copies of the formal disc.
So the normality of the formal neighborhood of every closed point follows from the normality of $\QM^\alpha_\fg$.
Unfortunately, since $\bQ$ is not noetherian it does not imply the normality of $\bQ$ itself.

\medskip
\noindent
The group $\BG_m$ acts on $\fQ$ and $\bQ$ by loop rotations, and the line
bundles $\CO(\check\lambda)$ are $\BG_m$-equivariant. Hence $\BG_m$ acts
on the global sections of these line bundles. We will denote by
$\widetilde\Gamma(\fQ,\CO(\check\lambda))\subset\Gamma(\fQ,\CO(\check\lambda))$
the subspace of $\BG_m$-finite sections.

\th{isoq}
The restriction $\Gamma(\bQ_\infty,\CO(\check\lambda))\to
\widetilde\Gamma(\fQ_\infty,\CO(\check\lambda))=
\widetilde\Gamma(\fQ,\CO(\check\lambda))$ is an isomorphism
for any $\check\lambda\in\Lambda^\vee$.
\eth

\begin{proof}
The closed embedding $\varphi_{\alpha,\beta}:\ \QM^\alpha_\fg\hookrightarrow
\QM^\beta_\fg$ lifts in an evident way to the same named closed embedding of
$T$-torsors $\widehat\QM{}^\alpha_\fg\hookrightarrow\widehat\QM{}^\beta_\fg$.
We denote the limit of this system by $\widehat\fQ$, a $T$-torsor over $\fQ$.
The construction of~\refss{indpro} defines a $T$-equivariant closed embedding
$s:\ \widehat\fQ_\infty\hookrightarrow\widehat\bQ_\infty:=G/U_-(R)$. 
We have to prove
that the restriction $\BC[\widehat\bQ_\infty]\to
\widetilde\BC[\widehat\fQ_\infty]=\widetilde\BC[\widehat\fQ]$ is an isomorphism.
Here $\widetilde\BC[\widehat\fQ_\infty]$ (resp. $\widetilde\BC[\widehat\fQ]$)
stands for the ring of $\BG_m$-finite functions on $\widehat\fQ_\infty$
(resp. $\widehat\fQ$).

To this end we mimick the argument of~\cite[Section~2]{BF11}.
We choose a {\em regular} dominant $\mu\in\Lambda^+$, and consider the
corresponding $T$-fixed point $t^\mu\in\Gr_G$. Its stabilizer $\on{St}_\mu$
in $G[t^{-1}]$ has the unipotent radical $\on{RadSt}_\mu$, and the quotient
$\on{St}_\mu/\on{RadSt}_\mu$ is canonically isomorphic to $T$.
The quotient $G[t^{-1}]/\on{St}_\mu$ is the $G[t^{-1}]$-orbit
$\sW_{G,\mu}\subset\Gr_G$ of $t^\mu$ (see~\cite[Section~2.4]{BF11}),
and the quotient $G[t^{-1}]/\on{RadSt}_\mu$ is a $T$-torsor
$\widehat\sW_{G,\mu}$.

{\em NB:} The group denoted $\on{St}_\mu$ in~\cite[Section~2.6]{BF11} is
the intersection of our present $\on{St}_\mu$ with the first congruence
subgroup $G_1\subset G[t^{-1}]$.

In modular terms, $\sW_{G,\mu}$ parametrizes the $G$-bundles on $\bC$
of isomorphism type $W\mu$ equipped with a trivialization on $\bC-0$
(see~\cite[Proof of~Theorem~2.8]{BF11}). Such a bundle $\CF_G$
possesses a canonical Harder-Narasimhan flag $HN(\CF_G)$. Note that this flag
is complete, i.e. it is a reduction to the Borel, since $\mu$ is regular.
In particular, the fiber $\CF_{G,\infty}$ of $\CF_G$ at $\infty\in\bC$ is
equipped with a
canonical reduction to the Borel. Now $\widehat\sW_{G,\mu}$ parametrizes
the data as above along with a further reduction of $\CF_{G,\infty}$ to the
unipotent radical of the Borel.

In complete similarity with~\cite[Lemma~2.7]{BF11} we have

\begin{lemma}
\label{2.7}
(1) Fix $n\geq1$, and let $\mu\in\Lambda^+_{\on{reg}}$ satisfy the following
condition: $\langle\mu,\check\alpha\rangle\geq n$ for every positive root
$\check\alpha$ of $\fg$. Then the image of $\on{RadSt}_\mu$ in
$G[t^{-1}]/G_n=G(R_n)$ is equal to $U_-(R_n)$. In particular, we have a
natural map $\pi_{\mu,n}:\ \widehat\sW_{G,\mu}\to G(R_n)/U_-(R_n)$.

(2) Under the assumption of (1), for every $k<n$, the map
$\pi_{\mu,n}^*:\ \BC[G(R_n)/U_-(R_n)]\to\BC[\widehat\sW_{G,\mu}]$ induces
an isomorphism on functions of homogeneity degree $k$ with respect to $\BG_m$.
\qed
\end{lemma}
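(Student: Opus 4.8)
The plan is to deduce both statements from an explicit description of the stabilizer $\on{St}_\mu\subseteq G[t^{-1}]$, following the pattern of~\cite[Lemma~2.7]{BF11} and feeding in the pairings $\langle\mu,\check\alpha\rangle$; the argument is essentially group-theoretic, the modular picture serving only as a guide. First I would compute $\on{St}_\mu=t^\mu G(R)t^{-\mu}\cap G[t^{-1}]$ from the root decomposition of $\fg$. An element $g$ fixes $t^\mu\in\Gr_G$ precisely when $t^{-\mu}gt^\mu\in G(R)$, and conjugation by $t^\mu$ rescales the one-parameter root subgroup $U_{\check\alpha}$ by $t^{\langle\mu,\check\alpha\rangle}$. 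Chasing this through, together with the elementary but crucial remark that $\BC[t^{-1}]^\times=\BC^\times$ (so that the polynomial torus loops reduce to the constant torus $T$, with no congruence contribution), one finds that $\on{St}_\mu$ is generated by $T$, by the full negative root subgroups $U_{\check\alpha}(\BC[t^{-1}])$ with $\check\alpha<0$, and by the positive root subgroups $U_{\check\alpha}(t^{-\langle\mu,\check\alpha\rangle}\BC[t^{-1}])$ (those vanishing to order $\geq\langle\mu,\check\alpha\rangle$ at $\infty$) with $\check\alpha>0$. Each root factor is pro-unipotent, so the maximal reductive quotient is exactly the constant $T$; this confirms $\on{St}_\mu/\on{RadSt}_\mu\cong T$ and identifies $\on{RadSt}_\mu$ with the product of all the root factors. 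Regularity of $\mu$ is what makes every pairing nonzero, so that this ``opposite Iwahori'' shape (equivalently, completeness of the Harder--Narasimhan flag) holds.

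For part~(1) I would then pass to $G(R_n)=G[t^{-1}]/G_n$. The negative factors surject onto the full $U_{\check\alpha}(R_n)$ and jointly generate $U_-(R_n)$, while under the hypothesis $\langle\mu,\check\alpha\rangle\geq n$ the positive factors vanish to order $\geq n$ at $\infty$ and therefore die in $R_n=R/(t^{-n})$. Hence the image of $\on{RadSt}_\mu$ is exactly $U_-(R_n)$; since $\on{RadSt}_\mu$ lands in the preimage of $U_-(R_n)$, this produces the well-defined map $\pi_{\mu,n}\colon\widehat\sW_{G,\mu}=G[t^{-1}]/\on{RadSt}_\mu\to G(R_n)/U_-(R_n)$.

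For part~(2) I would write $\pi_{\mu,n}$ as the projection $G[t^{-1}]/\on{RadSt}_\mu\to G[t^{-1}]/\widetilde U$, where $\widetilde U$ is the preimage of $U_-(R_n)$. Because $\on{RadSt}_\mu$ already surjects onto $U_-(R_n)$ we get $\widetilde U=\on{RadSt}_\mu\cdot G_n$, so $\pi_{\mu,n}$ is a pro-affine-space fibration with fibre $G_n/(G_n\cap\on{RadSt}_\mu)$. All coordinates on this fibre are coefficients of $t^{-j}$ with $j\geq n$, hence of $\BG_m$-degree $\geq n$. Dominance of $\pi_{\mu,n}$ makes $\pi_{\mu,n}^*$ injective, and any function of homogeneity degree $k<n$ cannot involve a fibre coordinate, so it is fibre-constant and descends; this gives the claimed isomorphism in each degree $k<n$.

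The computation of $\on{St}_\mu$ is routine once the polynomial-units observation is in place, so I expect the genuinely delicate step to be the graded fibration argument of part~(2): one must verify that $\pi_{\mu,n}$ really is a $\BG_m$-equivariant (pro-)trivial affine-space bundle, so that ``degree $<n$ forces fibre-constancy'' is legitimate despite $\widehat\sW_{G,\mu}$ and $G(R_n)/U_-(R_n)$ being of infinite type. This is precisely where I would lean on the verbatim argument of~\cite[Lemma~2.7]{BF11}.
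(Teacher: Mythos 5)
Your overall strategy --- compute $\on{St}_\mu$ explicitly by conjugating root subgroups by $t^\mu$, read off the image of $\on{RadSt}_\mu$ in $G(R_n)$, and treat $\pi_{\mu,n}$ as a graded affine fibration whose fibre coordinates sit in degrees $\geq n$ --- is exactly the one of \cite[Section~2]{BF11}, to which the paper defers without writing out any argument. But your explicit description of $\on{St}_\mu$ is wrong under the conventions in force here, and since it is the computational heart of your proof, this is a genuine gap. In the modular picture $\sW_{G,\mu}$ parametrizes $G$-bundles on $\bC=\PP^1$ of isomorphism type $W\mu$ trivialized off $0$, so $\on{St}_\mu$ is the automorphism group of such a bundle $\CF_G$; the automorphism group of a $G$-bundle on a projective curve is a \emph{finite-dimensional} algebraic group (its Lie algebra is the space of global sections of the adjoint bundle $\grg_{\CF_G}$), whereas the group you describe --- the full $U_{\check\alpha}(\BC[t^{-1}])$ for $\check\alpha<0$ together with positive root subgroups of high vanishing --- is infinite-dimensional. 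Concretely, $\on{St}_\mu=G[t^{-1}]\cap t^\mu G[[t]]t^{-\mu}$ with $t$ the coordinate vanishing at $0$, and the condition on $u_{\check\alpha}(x)$, $x\in\BC[t^{-1}]$, is $x\in t^{\langle\mu,\check\alpha\rangle}\BC[[t]]$: for $\check\alpha>0$ regularity of $\mu$ forces $x=0$, while for $\check\alpha<0$ it bounds the degree of $x$ in $t^{-1}$ by $\langle\mu,-\check\alpha\rangle$. So $\on{RadSt}_\mu$ is the finite-dimensional unipotent group built from negative root subgroups with polynomial parameters of bounded degree, with no positive-root contribution at all. Your elements visibly fail to stabilize $t^\mu$: for $\check\alpha<0$ and $N>\langle\mu,-\check\alpha\rangle$, $u_{\check\alpha}(t^{-N})$ conjugates to $u_{\check\alpha}(t^{\langle\mu,-\check\alpha\rangle-N})\notin G[[t]]$. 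What you wrote down is the stabilizer in a ``Grassmannian at $\infty$'' convention, which is incompatible both with the modular description and with the fact that $\sW_{G,\mu}$ meets every $\overline\Gr{}^\lambda_G$ with $\lambda\geq\mu$ (forcing $\on{St}_\mu$ to be small).

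The error happens not to propagate: under the hypothesis $\langle\mu,\check\alpha\rangle\geq n$ the correct radical still surjects onto each $U_{\check\alpha}(R_n)$, $\check\alpha<0$ (a polynomial of degree $\leq\langle\mu,-\check\alpha\rangle$ in $t^{-1}$ is unconstrained modulo $t^{-n}$) and maps into $U_-(R_n)$, giving part (1); and your part (2) argument --- $\pi_{\mu,n}$ is the quotient by $\on{RadSt}_\mu\cdot G_n$, the fibre $G_n/(G_n\cap\on{RadSt}_\mu)$ has coordinate ring of the form $\BC\oplus(\mathrm{deg}\geq n)$ because $\BC[G_n]$ does, and injectivity follows from dominance --- does not depend on which description of the fibre one uses. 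So the proof is repairable by substituting the correct stabilizer computation, but as written its central intermediate claim is false.
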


We denote the intersection of $\sW_{G,\mu}\subset\Gr_G$ with
$\overline\Gr{}^\lambda_G$ by $\overline\sW{}^\lambda_{G,\mu}$. We denote the
preimage of $\overline\sW{}^\lambda_{G,\mu}\subset\sW_{G,\mu}$ in
$\widehat\sW_{G,\mu}$ by $\widehat\sW{}^\lambda_{G,\mu}$.
In complete similarity with~\cite[Theorem~2.8]{BF11} we have

\begin{lemma}
\label{2.8}
(1) Let $\lambda\geq\mu\in\Lambda^+_{\on{reg}}$, and let $\alpha=\lambda-\mu$.
Then there exists a natural birational $T\times\BG_m$-equivariant morphism
$s^\lambda_\mu:\ \widehat\sW{}^\lambda_{G,\mu}\to\widehat\oQM{}^{-w_0\alpha}_\fg$
such that for any $n$ satisfying the condition in~Lemma~\ref{2.7}(1), the
following diagram is commutative:
\eq{CD}
\begin{CD}
\widehat\sW{}^\lambda_{G,\mu} @>s^{\lam}_{\mu}>> \widehat\oQM{}^{-w_0\alpha}_\fg\\
@V\pi_{\mu,n}VV @VVp_n\circ s_{-w_0\alpha} V\\
 G(R_n)/U_-(R_n)@>\on{id}>> G(R_n)/U_-(R_n)
 \end{CD}
\end{equation}
($s_\alpha$ was constructed in~\refss{indpro}).

(2) The map $(s^\lambda_\mu)^*:\ \BC[\widehat\oQM{}^{-w_0\alpha}_\fg]\to
\BC[\widehat\sW{}^\lambda_{G,\mu}]$ induces an isomorphism on functions of
degree $<n$ for any $n$ satisfying the condition in~Lemma~\ref{2.7}(1). \qed
\end{lemma}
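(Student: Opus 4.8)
The plan is to follow the proof of~\cite[Theorem~2.8]{BF11} closely, the only change being that the fibre at $\infty$ is divided by $U_-$ rather than by the full opposite Borel. First I would construct $s^\lambda_\mu$ out of the Harder--Narasimhan reduction. A $\BC$-point of $\widehat\sW{}^\lambda_{G,\mu}$ is a $G$-bundle $\CF_G$ on $\bC$ of isomorphism type $W\mu$, with singularity bounded by $\lambda$ at $0$ and trivialized by $\sigma$ on $\bC-0$, together with a reduction of the fibre $\CF_{G,\infty}$ to the unipotent radical $U_-$ refining the canonical Borel reduction. Since $\mu$ is regular, the Harder--Narasimhan filtration supplies a complete Borel reduction $\CF_B$ of $\CF_G$ over all of $\bC$, and hence for each dominant $\check\lambda'=\sum_i d_i\check\omega_i$ a line bundle $\CL_{\check\lambda'}=\CF_B\times_B\BC_{\check\lambda'}\hookrightarrow V_{\check\lambda'}^{\CF_G}$. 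Transporting this inclusion through $\sigma$ and saturating inside $V_{\check\lambda'}\otimes\CO_\bC$ yields a generalized line subbundle whose defect is concentrated at $0$; the essential computation is that the total defect degree equals $\alpha=\lambda-\mu$, so that the resulting collection is a $\BC$-point of $\QM^\alpha_\fg$. The $U_-$-reduction at $\infty$, which is exactly the $T$-torsor datum distinguishing $\widehat\sW$ from $\sW$, then provides the nowhere-vanishing Pl\"ucker sections $v_{\check\lambda'}\in\Gamma(\bC-0,\CL_{\check\lambda'})$, matching the $T$-torsor structures on the two sides and promoting the point to $\widehat\QM{}^\alpha_\fg$. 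This construction is natural in families and manifestly $T\times\BG_m$-equivariant, since the Harder--Narasimhan reduction, $\sigma$, the saturation, and the expansion at $\infty$ are all natural for the Cartan action and for loop rotation.

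Next I would verify the commutativity of~\refe{CD} and the birationality. The composite $p_n\circ s_\alpha\circ s^\lambda_\mu$ records the expansion at $\infty$, modulo $t^{-n}$, of the sections $v_{\check\lambda'}$, while $\pi_{\mu,n}$ records the germ at $\infty$ of $\CF_G$ together with its $U_-$-reduction, viewed in $G(R_n)/U_-(R_n)$ through~Lemma~\ref{2.7}(1). Because the defect of $s^\lambda_\mu$ lies entirely over $0$, the construction is an isomorphism in a neighbourhood of $\infty$, so both maps read off the very same $U_-(R_n)$-reduction and the square commutes tautologically. For birationality I would isolate the dense open locus on which the Harder--Narasimhan reduction produces an honest based map $\bC\to G/B$ of degree $\alpha$ with no defect at $0$: over it $s^\lambda_\mu$ is an isomorphism, and since $\widehat\sW{}^\lambda_{G,\mu}$ and $\widehat\QM{}^\alpha_\fg$ are irreducible of the same dimension by the dimension counts of~\cite{BF11}, it follows that $s^\lambda_\mu$ is birational.

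Finally, part~(2) would be deduced from~\refe{CD} together with~Lemma~\ref{2.7}(2). Since $\QM^\alpha_\fg$ is irreducible and normal by~\cite[Corollary~2.10]{BF11} and $s^\lambda_\mu$ is dominant, the pullback $(s^\lambda_\mu)^*$ is injective in every $\BG_m$-degree. For surjectivity in degrees $<n$ I would use the factorization $\pi_{\mu,n}^*=(s^\lambda_\mu)^*\circ(p_n\circ s_\alpha)^*$ read off from the commutative square: by~Lemma~\ref{2.7}(2) the map $\pi_{\mu,n}^*$ is an isomorphism onto the functions of degree $<n$ on $\widehat\sW_{G,\mu}$, and restriction from $\widehat\sW_{G,\mu}$ to the closed subscheme $\widehat\sW{}^\lambda_{G,\mu}$ is surjective in these low degrees (the complement of $\widehat\sW{}^\lambda_{G,\mu}$ acquiring codimension growing with the regularity of $\mu$), so every function of degree $<n$ on $\widehat\sW{}^\lambda_{G,\mu}$ lies in the image of $(s^\lambda_\mu)^*$. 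Injectivity and surjectivity together yield the asserted isomorphism on functions of degree $<n$.

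I expect the principal difficulty to lie not in the geometric construction but in the two function-theoretic points just used: first, confirming that the saturation step produces precisely the defect degree $\alpha=\lambda-\mu$, so that $s^\lambda_\mu$ lands in the correct component $\widehat\QM{}^\alpha_\fg$; and second, controlling the $\BG_m$-graded functions of degree $<n$ across the locus where the birational map $s^\lambda_\mu$ fails to be an isomorphism. Both are governed by the regularity hypothesis $\langle\mu,\check\alpha\rangle\geq n$ of~Lemma~\ref{2.7}(1) exactly as in~\cite{BF11}, and it is the normality of $\QM^\alpha_\fg$ from~\cite[Corollary~2.10]{BF11} that makes the comparison of function rings go through.
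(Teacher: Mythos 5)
Most of your proposal follows exactly the route the paper intends: the paper itself gives no argument beyond the modular description of $\widehat\sW_{G,\mu}$ and the instruction to proceed ``in complete similarity with'' Theorem~2.8 of \cite{BF11}, and your construction of $s^\lambda_\mu$ from the Harder--Narasimhan flag, the matching of $T$-torsor structures via the $U_-$-reduction at $\infty$ (a nowhere-vanishing section of a line bundle on $\bC-0\cong\AA^1$ is determined by its value at $\infty$), the commutativity of \refe{CD}, the birationality, and the injectivity half of part~(2) are all in order. The genuine gap is in the surjectivity half of part~(2), which is the real content of that part. You reduce it to the claim that restriction $\BC[\widehat\sW_{G,\mu}]\to\BC[\widehat\sW{}^\lambda_{G,\mu}]$ is surjective in degrees $<n$, and you justify this by saying that ``the complement of $\widehat\sW{}^\lambda_{G,\mu}$ acquires codimension growing with the regularity of $\mu$.'' That justification is a category error: $\widehat\sW{}^\lambda_{G,\mu}$ is a \emph{closed} subscheme of the ind-scheme $\widehat\sW_{G,\mu}$, so its complement is open and no codimension count is relevant. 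Codimension hypotheses govern Hartogs-type extension of functions across a closed subset of high codimension, i.e.\ extension \emph{from an open subset}; what you need is the opposite, extension of a function \emph{off a closed subscheme}, which is automatic when the ambient space is affine but is a genuine issue here, since the pieces $\widehat\sW{}^{\lambda'}_{G,\mu}$ are only quasi-affine (they fiber over $G/U_-$ with the affine slices as fibers).

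What is actually needed is the analogue of Lemma~\ref{2.7}(2) with $\widehat\sW{}^\lambda_{G,\mu}$ in place of $\widehat\sW_{G,\mu}$: every $\BG_m$-homogeneous function of degree $k<n$ on $\widehat\sW{}^\lambda_{G,\mu}$ is a pullback under $\pi_{\mu,n}$; by your commutative square this immediately gives surjectivity of $(s^\lambda_\mu)^*$ in degrees $<n$ (and, by Lemma~\ref{2.7}(2), your detour through extension to the ind-scheme is equivalent to this statement, not a way around it). This is where the work lies, and it is not covered by the argument proving Lemma~\ref{2.7}(2) itself: the fibers of $\pi_{\mu,n}$ are orbits of the congruence subgroup $G_n$, and the proof that low-degree functions descend uses that nonconstant functions on $G_n$ sit in degrees $\geq n$ together with $G_n$-invariance --- but $\widehat\sW{}^\lambda_{G,\mu}$ is \emph{not} $G_n$-stable, so invariance does not even make sense on it. The correct mechanism is the graded Pl\"ucker description of $\widehat\sW{}^\lambda_{G,\mu}$ (the coefficients of the vectors $g\cdot v_{\check\omega_i}$ carry $\BG_m$-degree equal to the power of $t^{-1}$, and those of degree $<n$ factor through $\pi_{\mu,n}$), which forces homogeneous functions of degree $k<n$ to be polynomials in jet coordinates alone; this is the step your proposal is missing, and once it is supplied the rest of your argument goes through.
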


Now~\reft{isoq} immediately follows from~Lemma~\ref{2.7} and~Lemma~\ref{2.8}.
\end{proof}

Note that if one assumes \refco{codim two} then it follows that
the restriction $\Gamma(\bQ,\CO(\check\lambda))\to
\widetilde\Gamma(\fQ,\CO(\check\lambda))$ is an isomorphism
for any $\check\lambda\in\Lambda^\vee$.
(this follows immediately from~\reft{isoq},~Lemma~\ref{codim2}, and~\refco{codim two}).

\sec{3}{Cohomology vanishing}

From now on we assume that $G$ is simply laced.

\ssec{noname}{}
The group $\BG_m$ acts on $\fQ$ and $\bQ$ by loop rotations, and the line
bundles $\CO(\check\lambda)$ are $\BG_m$-equivariant. Hence $\BG_m$ acts
on the cohomology $H^n(\fQ,\CO(\check\lambda)):=\displaystyle{\lim_\leftarrow
H^n(\QM^\alpha_\fg,\CO(\check\lambda))}$
of these line bundles. We will denote by
$\widetilde{H}{}^n(\fQ,\CO(\check\lambda))\subset H^n(\fQ,\CO(\check\lambda))$
the subspace of $\BG_m$-finite classes.

Recall that $\alpha\mapsto\alpha^*$ stands for the natural (linear) isomorphism
between the coroot lattice of $\fg$ and its root lattice, taking the simple
coroots to the corresponding simple roots. Now $\Lambda_+$ contains a cofinal
subsystem $\Lambda_+^{\check\lambda}$ formed by $\alpha$ such that
$\alpha^*+\check\lambda$ is dominant.

\th{vanish}
(1) For $n>0$ and $\alpha\in\Lambda_+^{\check\lambda}$ we have
$H^n(\QM^\alpha_\fg,\CO(\check\lambda))=0$.

(2) For $n>0$ and $\check\lambda\in\Lambda^\vee$ we have
$\widetilde{H}{}^n(\fQ,\CO(\check\lambda))=0$.

(3) For $\check\lambda\not\in\Lambda^\vee_+$ we have
$\widetilde{H}{}^0(\fQ,\CO(\check\lambda))=0$.
\eth

\begin{proof}
(3) is clear, and (2) follows from (1). We prove (1).

According to~\cite[Proposition~5.1]{BF11}, $Z^\alpha_\fg$ is a Gorenstein
variety with rational singularities. Since $\QM^\alpha_\fg$ is, locally in
\'etale topology, isomorphic to $Z^\alpha_\fg\times\CB_\fg$, we conclude that
$\QM^\alpha_\fg$ is a Gorenstein variety with rational singularities as well.
(It is here that we use the assumption that $G$ is simply laced.)
Let us denote the dualizing sheaf of $\QM^\alpha_\fg$ by $\omega^\alpha$.

\begin{lemma}
\label{givlee}
$\omega^\alpha\simeq\CO(-\alpha^*-2\check\rho)$.
\end{lemma}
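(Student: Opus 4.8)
The plan is to identify $\omega^\alpha$ by computing the canonical class of $\QM^\alpha_\fg$ on its smooth locus and then pinning down the contribution of the defect divisors by a rank-one computation. Since $G$ is simply laced, $\QM^\alpha_\fg$ is Gorenstein (this is exactly what was just used), so $\omega^\alpha$ is a genuine line bundle and defines a class in $\Pic(\QM^\alpha_\fg)$. As $\QM^\alpha_\fg$ is normal it is regular in codimension one, so the complement of its smooth locus $\QM^{\alpha,\mathrm{sm}}_\fg$ has codimension $\geq 2$ and the restriction $\Pic(\QM^\alpha_\fg)\hookrightarrow\Cl(\QM^\alpha_\fg)=\Cl(\QM^{\alpha,\mathrm{sm}}_\fg)$ is injective; hence it suffices to match $\omega^\alpha$ with $\CO(-\alpha^*-2\check\rho)$ in codimension one. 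The smooth locus contains the dense open subscheme $U$ of genuine maps $\bC\to\CB_\fg$ (quasimaps without defect) together with the generic points of the defect divisors.

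First I would compute $\omega^\alpha|_U$ by deformation theory. On $U$ the tangent space at a map $f$ is $H^0(\bC,f^*T_{\CB_\fg})$, and the obstruction space $H^1(\bC,f^*T_{\CB_\fg})$ vanishes because $T_{\CB_\fg}$ is globally generated, so $U$ is smooth of the expected dimension. Applying Grothendieck--Riemann--Roch to the universal map over $U\times\bC$ expresses the determinant of the relative tangent complex through the determinant-of-cohomology line bundles, which are precisely the $\CO(\check\lambda)$; feeding in $\det T_{\CB_\fg}=\CO(2\check\rho)$ and the fact that the universal bundle has degree $\alpha$ then yields $\omega^\alpha|_U\simeq\CO(-\alpha^*-2\check\rho)|_U$. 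As a sanity check, for $G=\SL_2$ one has $\QM^\alpha_\fg\cong\BP^{2n+1}$ with $n=\langle\alpha,\check\omega\rangle$, and $K_{\BP^{2n+1}}=\CO(-(2n+2))=\CO(-\alpha^*-2\check\rho)$, so the formula holds on the nose, including across the boundary.

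It then remains to control the two line bundles along the defect divisors, and this is the step I expect to be the main obstacle: the complement of $U$ has codimension one, so agreement on $U$ only shows that $\omega^\alpha\otimes\CO(\alpha^*+2\check\rho)\simeq\CO(\sum_i m_i\overline{D}_i)$ for the defect divisors $\overline{D}_i$ (indexed by the simple coroots $\alpha_i$), and one must prove $m_i=0$. Each $m_i$ is computed at a generic, hence smooth, point of $\overline{D}_i$, where $\QM^\alpha_\fg$ factors étale-locally as a smooth maps factor times the rank-one quasimap space for the $\SL_2$ attached to $\alpha_i$, namely $\BP^3$; there the explicit $\SL_2$ computation above holds globally, in particular across its defect divisor, forcing $m_i=0$. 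This yields the isomorphism of line bundles, and since both $\omega^\alpha$ and $\CO(-\alpha^*-2\check\rho)$ carry canonical $T\times\BG_m$-equivariant structures agreeing on the dense locus $U$, the isomorphism is $T\times\BG_m$-equivariant. The delicate point throughout is that the dualizing sheaf of the singular space $\QM^\alpha_\fg$ is invisible on the smooth maps locus alone; the Gorenstein--rational property (hence simply-lacedness, via~\cite[Proposition~5.1]{BF11}) is exactly what lets us treat $\omega^\alpha$ as a line bundle and reduce everything to the codimension-one behaviour captured by the rank-one models.
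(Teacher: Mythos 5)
Your overall strategy --- reduce to codimension one via the Gorenstein and normality properties, identify the two line bundles on the locus $U$ of genuine maps, then determine the boundary coefficients $m_i$ --- has the right shape, and both your reduction and your global $\SL_2$ check are correct. The genuine gap is in the last step. The coefficient $m_i$ is the order of vanishing along $\overline{D}_i$ of a rational section of $\omega^\alpha\otimes\CO(\alpha^*+2\check\rho)$ that is regular and nonvanishing on $U$, and this is irreducibly global information: in an \'etale neighbourhood of a generic point of $\overline{D}_i$ every line bundle is trivial, so the factorization of $\QM^\alpha_\fg$ as (smooth maps factor) $\times$ (local model of $\QM^{\alpha_i}_{\fsl_2}$ near its defect locus) by itself says nothing about $m_i$ unless you can match the trivialization over $U$ with one induced by the product structure --- and Grothendieck--Riemann--Roch produces an equality of classes in $\Pic(U)$, not a preferred section. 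The $\SL_2$ case makes the danger concrete: $U$ is the complement in $\BP^{2n+1}$ of the degree-$2n$ resultant hypersurface $D$, so $\Pic(U)\cong\ZZ/2n$; the raw GRR evaluation of $c_1(\det R\pi_*ev^*T_{\BP^1})$ on $U$ (using $ev^*\CO_{\BP^1}(1)\simeq\CO(1)\boxtimes\CO(n)$) gives $(4n+2)H$ rather than $(2n+2)H$, and the two agree only because $2nH=[D]$ dies in $\Pic(U)$. So your step on $U$ only ever identifies the bundles modulo the boundary classes --- which is essentially automatic once $\omega^\alpha$ is known to be a line bundle --- and all of the content is concentrated in the determination of the $m_i$, which the proposal does not actually carry out.

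To close the gap you need a genuinely global computation transverse to, or inside, each $\overline{D}_i$. This is in effect what the paper does, by a different route: it first shows $\omega^\alpha\simeq\CO(\check\lambda)$ for \emph{some} $\check\lambda$ by working on $\overset{\circ}\QM{}^\alpha_\fg$ (whose complement has codimension two, so no boundary-divisor ambiguity arises), using the fibration $ev_\infty$ over $\CB_\fg$ together with the triviality of $K_{Z^\alpha_\fg}$ from \cite{BF11}; it then evaluates $\check\lambda$ by pairing with the complete curves $C^\phi_i\subset\overset{\bullet}\QM{}^\alpha_\fg$ of quasimaps $\phi(\alpha_i\cdot c)$, $c\in\bC$, on which $\deg\CO(\check\omega_j)=\delta_{ij}$ while $\deg(\omega^\alpha|_{C^\phi_i})=-\langle\alpha_i,\alpha^*+2\check\rho\rangle$ is an honest degree computation imported from \cite[Proposition~4.4]{FK}. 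Some explicit degree input of this kind on complete test curves (or an explicit comparison of sections across the boundary) is unavoidable; the \'etale-local rank-one model cannot supply it.
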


\begin{proof}
In case $G=\SL(N)$, the lemma is proved in~\cite[Theorem~3]{GL}.
For arbitrary simply laced $G$ we first prove that $\omega^\alpha\simeq
\CO(\check\lambda)$ for some $\check\lambda$. It is enough to check this
on the open subscheme $\oQM{}^\alpha_\fg$ since the complement
is of codimension two. We have the morphism of evaluation at $\infty\in\bC:\
\oQM{}^\alpha_\fg\stackrel{ev_\infty}{\longrightarrow}\CB_\fg$.
It is a $G$-equivariant fibration with fibers isomorphic to $Z^\alpha_\fg$.
Since the big cell $U\cdot e_-\subset\CB_\fg$ is a free orbit of $U$, we have
$ev_\infty^{-1}(U\cdot e_-)\simeq Z^\alpha_\fg\times U$. The canonical class
of $Z^\alpha_\fg$ is trivial (see~\cite[Proof of~Proposition~5.1]{BF11}), hence
the canonical class of $ev_\infty^{-1}(U\cdot e_-)$ is trivial as well.
Thus $\omega^\alpha$ has a nowhere vanishing section $\sigma$ on
$ev_\infty^{-1}(U\cdot e_-)$. Hence the class of $\omega^\alpha$ on
$\oQM{}^\alpha_\fg$ is a linear combination of the pullbacks
under $ev_\infty$ of the Schubert divisors on $\CB_\fg$. The pullback of an
irreducible Schubert divisor being $\CO(\check\omega_i)$ we conclude that
there exists $\check\lambda$ such that $\omega^\alpha\simeq\CO(\check\lambda)$.

It remains to check $\check\lambda=-\alpha^*-2\check\rho$. We will do this on
another open subscheme $\overset{\bullet}\QM{}^\alpha_\fg\subset\QM^\alpha_\fg$ with the
complement of codimension two. Namely, $\overset{\bullet}\QM{}^\alpha_\fg$
is the moduli space of quasimaps with defect at most a simple coroot (or no
defect at all). Note that $\overset{\bullet}\QM{}^\alpha_\fg$ is smooth,
and the Kontsevich resolution is an isomorphism over it. Let us fix a
quasimap without defect $\phi\in\QM^{\alpha-\alpha_i}_\fg$, and consider a curve
$C^\phi_i\subset\overset{\bullet}\QM{}^\alpha_\fg$ formed by all the
quasimaps $\phi(\alpha_i\cdot c),\ c\in\bC$ (twisting $\phi$ by an arbitrary point of $\bC$).
It is easy to see that $\deg(\CO(\check\omega_j)|_{C^\phi_i})=\delta_{ij}=
\langle\alpha_i,\check\omega_j\rangle$. Hence it remains to check that
$\deg(\omega^\alpha|_{C^\phi_i})=-\langle\alpha_i,\alpha^*+2\check\rho\rangle$.
This is done in~\cite[Proposition~4.4]{FK}. Although {\em loc. cit.} is
formulated for $G=\SL(N)$, its proof goes through word for word for arbitrary
simple $G$.

The lemma is proved. \end{proof}

We are ready to finish the proof of the theorem. For
$\alpha\in\Lambda^{\check\lambda}_+$ the line bundle
$\CL=\CO(\check\lambda)\otimes(\omega^\alpha)^*$ on $\QM^\alpha_\fg$ is
very ample. We have to prove that $H^n(\QM^\alpha_\fg,\CO(\check\lambda))=
H^n(\QM^\alpha_\fg,\CL\otimes\omega^\alpha)=0$ for $n>0$. According
to~\cite[Proposition~5.1]{BF11}, $\QM^\alpha_\fg$ has rational singularities.
Let $\pi:\ X\to\QM^\alpha_\fg$ be a resolution of singularities. Then for the
canonical line bundle $\omega_X$ of $X$ we have $R\pi_*\omega_X=\omega^\alpha$.
Hence $H^n(\QM^\alpha_\fg,\CL\otimes\omega^\alpha)=
H^n(X,\pi^*\CL\otimes\omega_X)=0$ (for $n>0$) by Kawamata-Viehweg vanishing
since $\pi^*\CL$ is nef and big.

This completes the proof of the theorem.
\end{proof}

\sec{4}{$q$-Whittaker functions}

\ssec{charac}{The character of $R\Gamma(\QM^\alpha_\fg,\CO(\check\lambda))$}
Recall~\cite[Introduction]{BF11} that $\fJ_\alpha(q,z)$
is the character of $T\times\BG_m$-module $\BC[Z^\alpha_\fg]$, a rational
function on $T\times\BG_m$. Let $x_i$ stand for the character of the dual
torus $\check T$ corresponding to the simple coroot $\alpha_i$. For
$\alpha\in\Lambda_+$ the corresponding character of $\check T$ is denoted by
$x^\alpha$. We consider the formal generating functions
$J_\fg(q,z,x)=\sum_{\alpha\in\Lambda_+}x^\alpha\fJ_\alpha$, and
$\fJ_\fg(q,z,x)=\prod_{i\in I}x_i^{\log(\check\omega_i)/\log q}J_\fg(q,z,x)$,
cf.~\cite[Equation~(18)]{BF0}.

According to~\cite[Corollary~1.6]{BF11}, the function $\fJ_\fg(q,z,x)$
is an eigen-function of the quantum difference Toda integrable system
associated with $\fg$. For example, if $G=\SL(N)$, the function
$\fJ_\fg(q,z,x)$ is an eigenfunction of the operator
$\fG=T_1+T_2(1-x_1)+\ldots+T_N(1-x_{N-1})$, cf.~\cite[Equation~(16)]{BF0},
where $T_k(F(q,z,x_1,\ldots,x_{N-1}))=F(q,z,x_1,\ldots,x_{k-2},q^{-1}x_{k-1},
qx_k,x_{k+1},\ldots,x_{N-1})$.

Note that if we plug $x=q^{\check\lambda}$ into $J_\fg(q^{-1},z,x)$ or into
$\fJ_\fg(q^{-1},z,x)$, then for $\check\lambda\in\Lambda^\vee_+$ these formal
series converge, and we have $\fJ_\fg(q^{-1},z,q^{\check\lambda}):=\prod_{i\in I}
(q^{\langle\alpha_i,\check\lambda\rangle})^{\log(\check\omega_i)/\log q}
J_\fg(q^{-1},z,q^{\check\lambda})=
z^{\check\lambda}J_\fg(q^{-1},z,q^{\check\lambda})$ (a formal Taylor series in
$q$ with coefficients in Laurent polynomials in $z$).

The following lemma is a reformulation of~\cite[Proposition~2]{GL}:

\begin{lemma}
\label{gile}
The class of $R\Gamma(\QM^\alpha_\fg,\CO(\check\lambda))$ in
$K_{T\times\BG_m}(pt)$ equals
$$\sum_{\substack{\gamma+\beta=\alpha\\ w\in W}}
z^{w\check\lambda}q^{\langle\gamma,\check\lambda\rangle}
\fJ_\gamma(q^{-1},wz^{-1})\fJ_\beta(q,wz^{-1})
\prod_{\check\alpha\in\check{R}{}^+}(1-wz^{-\check\alpha})^{-1}.$$
\end{lemma}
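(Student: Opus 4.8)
The plan is to compute the class of $R\Gamma(\QM^\alpha_\fg,\CO(\check\lambda))$ in $K_{T\times\BG_m}(pt)$ by $T\times\BG_m$-equivariant localization, and then to recognise the resulting fixed-point contributions as the Zastava characters $\fJ_\gamma,\fJ_\beta$ together with the flag-variety Weyl denominator. Since by~\reft{vanish} (more precisely by~\cite[Proposition~5.1]{BF11}) the space $\QM^\alpha_\fg$ is Gorenstein with rational singularities, equivariant Riemann--Roch and the Atiyah--Bott--Thomason localization theorem apply, so the sought class is the sum, over the connected components of the $\BG_m$-fixed locus, of the localized contributions $\det(1-N^\vee)^{-1}$ weighted by the restriction of $\CO(\check\lambda)$.

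First I would identify the $\BG_m$-fixed locus of $\QM^\alpha_\fg$ for the loop-rotation action. A quasimap is fixed precisely when its underlying map to $\CB_\fg$ is constant and its defect is supported at the two fixed points $0,\infty\in\bC$. Writing the defect as $\gamma\cdot0+\beta\cdot\infty$ with $\gamma+\beta=\alpha$, the fixed locus is therefore a disjoint union, over all such decompositions, of copies of $\CB_\fg$ recording the constant value of the map. The restriction of $\CO(\check\lambda)$ to such a component is $\CO(\check\lambda)$ on $\CB_\fg$ tensored with the one-dimensional $\BG_m$-character $q^{\langle\gamma,\check\lambda\rangle}$ carried by the defect at $0$; this accounts for the scalar factor $q^{\langle\gamma,\check\lambda\rangle}$ and (after the torus localization below) for $z^{w\check\lambda}$.

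Next I would analyse the normal directions to this fixed locus. Deformations that smooth the defect at $0$ are governed, locally in the \'etale topology, by a Zastava space, as in the local product structure $\QM^\alpha_\fg\cong Z^\alpha_\fg\times\CB_\fg$ used throughout~\refs{3}; by the factorization property of the Zastava spaces the relevant local model at a fixed quasimap with defect $\gamma\cdot0+\beta\cdot\infty$ splits as $Z^\gamma_\fg\times Z^\beta_\fg$. Because loop rotation acts on the tangent line to $\bC$ with opposite weights at $0$ and at $\infty$, the inverse Euler class $\det(1-N^\vee)^{-1}$ of the normal bundle is exactly the character of $\BC[Z^\gamma_\fg]\otimes\BC[Z^\beta_\fg]$ evaluated at the two opposite $\BG_m$-parameters, i.e.\ $\fJ_\gamma(q^{-1},\cdot)\,\fJ_\beta(q,\cdot)$. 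Finally, integrating over the residual $\CB_\fg$ factor by ordinary $T$-equivariant localization replaces $\CB_\fg$ by its $W$ many $T$-fixed points: the fibre of $\CO(\check\lambda)$ at $w\cdot B$ contributes $z^{w\check\lambda}$, the two Zastava characters get their torus variable conjugated to $wz$, and the tangent space $T_{wB}\CB_\fg$ produces the denominator $\prod_{\check\alpha\in\check R^+}(1-wz^{\check\alpha})^{-1}$. Summing over all $w\in W$ and all $\gamma+\beta=\alpha$ yields precisely the asserted formula; for $G=\SL(N)$ this is literally~\cite[Proposition~2]{GL}, and the argument above is the transcription of that computation to simply-laced $G$, the only new input being the definition $\fJ_\alpha=\ch\BC[Z^\alpha_\fg]$.

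The main obstacle I expect is the rigorous justification of the normal-bundle analysis: one must show that near a $\BG_m$-fixed quasimap the scheme $\QM^\alpha_\fg$ really looks, $T\times\BG_m$-equivariantly, like $Z^\gamma_\fg\times Z^\beta_\fg$ fibred over $\CB_\fg$, with the correct $q$ versus $q^{-1}$ weights and the $wz$-twist of the torus action, and that localization is legitimate on this singular (though Gorenstein, rationally singular) space. Controlling the scheme structure and multiplicities of the fixed locus, and pinning down the exact $\BG_m$-weight $q^{\langle\gamma,\check\lambda\rangle}$ of $\CO(\check\lambda)$ along the $0$-defect, are the delicate points; the remainder is a bookkeeping of weights in the Atiyah--Bott formula.
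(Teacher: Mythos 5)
Your overall strategy --- loop-rotation localization with fixed loci indexed by decompositions of the defect as $\gamma\cdot 0+\beta\cdot\infty$, each a copy of $\CB_\fg$, with local contributions given by Zastava characters --- correctly predicts the shape of the answer, but it is not the paper's argument, and as written it has a genuine gap at its central step. The Atiyah--Bott formula with contributions $\det(1-N_F^\vee)^{-1}$ presupposes a smooth ambient space: $\QM^\alpha_\fg$ is singular along the defect strata, there is no normal bundle to the fixed locus there, and ``Gorenstein with rational singularities'' does not by itself license the formula (rational singularities let you pass to a resolution, which is a different move). The assertion that the contribution of the component with defect $\gamma\cdot0+\beta\cdot\infty$ equals $\fJ_\gamma(q^{-1},wz)\fJ_\beta(q,wz)$ --- i.e.\ that the $K$-theoretic Lefschetz contribution of a singular fixed stratum with both attracting and repelling normal directions factors as the product of the equivariant Hilbert series of $\BC[Z^\gamma_\fg]$ and $\BC[Z^\beta_\fg]$ at opposite values of $q$ --- is exactly the hard content of the lemma, and you flag it as ``the main obstacle'' without resolving it. You would also need a genuinely $\BG_m$-equivariant local model near the fixed quasimaps (the \'etale-local splitting $\QM^\alpha_\fg\simeq Z^\alpha_\fg\times\CB_\fg$ is obtained by basing the quasimap at an auxiliary point of $\BP^1$ and is not loop-rotation equivariant), together with control of the scheme structure of the fixed locus.

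The paper sidesteps all of this. Using that $\QM^\alpha_\fg$ and $Z^\alpha_\fg$ have rational singularities, it identifies $R\Gamma(\QM^\alpha_\fg,\CO(\check\lambda))$ with $R\Gamma$ of the pullback to the Kontsevich resolution $\overline{M}_{0,0}(\BP^1\times\CB_\fg,(1,\alpha))$ and $\BC[Z^\beta_\fg]$ with $\BC[M^\beta_\fg]$, and then performs Atiyah--Bott--Lefschetz localization on that smooth space, where the fixed-point bookkeeping is literally \cite[Proof of Proposition~2]{GL}, carried over verbatim from $\SL(N)$ to general $G$. So the resolution is not an optional convenience: it is what makes the localization legitimate and what produces the $\fJ$'s as honest characters of coordinate rings. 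To salvage your direct approach you would have to prove a Lefschetz--Riemann--Roch statement on the singular quasimap space near each fixed component, and the natural way to do that is to re-introduce the resolution --- at which point you have reproduced the paper's proof.
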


\begin{proof}
Let $\pi:\ \overline{M}_{0,0}(\BP^1\times\CB_\fg,(1,\alpha))\to\QM^\alpha_\fg$
(resp. $\varpi:\ M^\alpha_\fg\to Z^\alpha_\fg$) be the Kontsevich resolution,
see e.g.~\cite[Appendix]{FFKM} (resp.~\cite[Proof of~Proposition~5.1]{BF11}).
Since the singularities of $\QM^\alpha_\fg$ (resp. $Z^\alpha_\fg$) are rational,
we have $R\Gamma(\QM^\alpha_\fg,\CO(\check\lambda))=
R\Gamma(\overline{M}_{0,0}(\BP^1\times\CB_\fg,(1,\alpha)),
\pi^*\CO(\check\lambda))$ (resp. $\BC[Z^\alpha_\fg]=\BC[M^\alpha_\fg]$).
Hence we have to express the character of
$R\Gamma(\QM^\alpha_\fg,\CO(\check\lambda))$ via the characters of
$\BC[M^\beta_\fg]$. This is done in~\cite[Proof of~Proposition~2]{GL} via
the Atiyah-Bott-Lefschetz localization to $T\times\BG_m$-fixed points of
$\overline{M}_{0,0}(\BP^1\times\CB_\fg,(1,\alpha))$. As usually, we have to
add that {\em loc. cit.} deals with $G=\SL(N)$, however, the proof
goes through word for word for arbitrary semisimple $G$.
\end{proof}

\ssec{lim charac}{The character of $\widetilde\Gamma(\fQ,\CO(\check\lambda))$}
By the proof of~\reft{isoq} and~Lemma~\ref{2.8}(2), the character
$\chi(\widetilde\Gamma(\fQ,\CO(\check\lambda)))$ is the limit of the characters
$\chi (R^0\Gamma(\QM^\alpha_\fg,\CO(\check\lambda)))$
as $\alpha\to\infty$. By~\reft{vanish}(1), as $\alpha\to\infty$, the limit
of the characters $\chi (R^{>0}\Gamma(\QM^\alpha_\fg,\CO(\check\lambda)))$
vanishes. Thus, the character $\chi(\widetilde\Gamma(\fQ,\CO(\check\lambda)))$
is the limit of the characters $\chi (R\Gamma(\QM^\alpha_\fg,\CO(\check\lambda)))$
as $\alpha\to\infty$. We define
$\fJ_\infty(q,z):=\displaystyle{\lim_{\alpha\to\infty}\fJ_\alpha(q,z)}$
(it is easy to see that the latter limit exists).

\prop{brav}
$$
\chi(\widetilde\Gamma(\fQ,\CO(\check\lambda)))=
\sum_{w\in W}\fJ_\fg(q^{-1},wz^{-1},q^{\check\lambda})\fJ_\infty(q,wz^{-1})
\prod_{\check\alpha\in\check{R}{}^+}(1-wz^{-\check\alpha})^{-1}.
$$
\eprop

\begin{proof}
As $\alpha$ goes to $\infty$, the formula of~Lemma~\ref{gile} goes to
$$\sum_{\substack{\gamma\in\Lambda_+\\ w\in W}}
z^{w\check\lambda}q^{\langle\gamma,\check\lambda\rangle}
\fJ_\gamma(q^{-1},wz^{-1})\fJ_\infty(q,wz^{-1})
\prod_{\check\alpha\in\check{R}{}^+}(1-wz^{-\check\alpha})^{-1}=$$
$$\sum_{w\in W}
z^{w\check\lambda}J_\fg(q^{-1},wz^{-1},q^{\check\lambda})\fJ_\infty(q,wz^{-1})
\prod_{\check\alpha\in\check{R}{}^+}(1-wz^{-\check\alpha})^{-1}=$$
$$\sum_{w\in W}\fJ_\fg(q^{-1},wz^{-1},q^{\check\lambda})\fJ_\infty(q,wz^{-1})
\prod_{\check\alpha\in\check{R}{}^+}(1-wz^{-\check\alpha})^{-1}.$$
\end{proof}

\cor{gera}
Let $\chi(\widetilde\Gamma(\fQ,\CO(\check\lambda)))=\Psi_{\chlam}(q,z)$. Then the functions
$\Psi_{\chlam}(q,z)$ satisfy all the conditions of \refco{whit-exist-co}.
\ecor

\begin{proof}
Part 2 of \refco{whit-exist-co} is obvious by construction. Also \refco{whit-exist-co}(1b) is obvious.
According to~\reft{vanish}(2), $\chi(\widetilde\Gamma(\fQ,\CO(\check\lambda)))=0$
if $\check\lambda\not\in\Lambda^\vee_+$, which proves \refco{whit-exist-co}(1a).

Let us prove \refco{whit-exist-co}(1c). The function
$\fJ_\fg(q^{-1},wz^{-1},q^{\check\lambda})$ on the lattice $\Lambda^\vee$ is an
eigenfunction of the quantum difference Toda restricted to the lattice.
According to~\refp{brav}, $\chi(\widetilde\Gamma(\fQ,\CO(\check\lambda)))$ is
a linear combination of the functions $\fJ_\fg(q^{-1},wz^{-1},q^{\check\lambda})$
with coefficients independent of $\check\lambda$. Hence $\Psi_{\chlam}(q,z)$
is an eigenfunction of the quantum difference Toda as well.
\end{proof}

\sec{5}{Weyl modules}

\ssec{nameless}{} Recall that $R=\BC[[t^{-1}]]$. We introduce a new variable
$\st=t^{-1}$, so that $R=\BC[[\st]]$. We set $\widetilde R:=\BC[\st]\subset R$.
The proalgebraic group $G(R)$ acts naturally on the profinite dimensional
vector space $\Gamma(\fQ,\CO(\check\lambda))$. The continuous dual
$\Gamma(\fQ,\CO(\check\lambda))^\vee$ coincides with the graded dual
$\widetilde\Gamma(\fQ,\CO(\check\lambda))^\vee$, and is equipped with a
natural action of $G(\widetilde R):\ g\cdot v^*(v):=v^*(\tau g\cdot v)$.
Here $g\mapsto\tau g$ is the Chevalley antiinvolution of $G$ identical on $T$.
The derivative of these actions gives rise to the actions of $\fg(R)$
and $\fg(\widetilde R)$. According to~\reft{isoq}, the
$\fg(\widetilde R)$-module $\Gamma(\fQ,\CO(\check\lambda))^\vee$ coincides
with the graded dual $\Gamma(\bQ_\infty,\CO(\check\lambda))^\vee$.

We denote the preimage of the big cell $U\cdot e_-\subset\CB_\fg$ in
$G/U_-\to\CB_\fg$ by $C\subset G/U_-$. We denote the open subscheme
$C(R)/T\subset G(R)/T\cdot U_-(R)=\bQ_\infty$ by $\overset{\circ}{\bQ}$.
We have the restriction morphism of $\fg(R)$-modules
$\Gamma(\bQ_\infty,\CO(\check\lambda))\hookrightarrow
\Gamma(\overset{\circ}{\bQ},\CO(\check\lambda))$. Now $C(R)$ is a free
orbit of $B(R)\subset G(R)$, and
$\Gamma(\overset{\circ}{\bQ},\CO(\check\lambda))=
\on{CoInd}_{\fu(R)\oplus\ft}^{\fg(R)}\BC_{\check\lambda}$. The graded dual
$\Gamma(\overset{\circ}{\bQ},\CO(\check\lambda))^\vee=
\on{Ind}_{\fu(\widetilde R)\oplus\ft}^{\fg(\widetilde R)}\BC_{\check\lambda}$.

\begin{lemma}
\label{?}
$\Gamma(\bQ_\infty,\CO(\check\lambda))\subset
\Gamma(\overset{\circ}{\bQ},\CO(\check\lambda))$ is the maximal
$G$-integrable $\fg(R)$-submodule. Equivalently,
$\Gamma(\bQ_\infty,\CO(\check\lambda))^\vee$ is the maximal $G$-integrable
$\fg(\widetilde R)$-quotient module of
$\Gamma(\overset{\circ}{\bQ},\CO(\check\lambda))^\vee$.
\end{lemma}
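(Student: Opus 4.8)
The plan is to prove the displayed submodule assertion; the equivalent quotient form then follows by passing to graded duals, since the Chevalley antiinvolution set up just above turns a $\BG_m$-graded $\fg(R)$-submodule into a $\fg(\widetilde R)$-quotient module and carries $G$-integrability (local finiteness for the constant $G\subset G(R)$) to $G$-integrability, hence ``maximal'' to ``maximal''. Throughout I write $\on{CoInd}:=\Gamma(\overset{\circ}{\bQ},\CO(\check\lambda))=\on{CoInd}_{\fu(R)\oplus\ft}^{\fg(R)}\BC_{\check\lambda}$ and regard $\Gamma(\bQ_\infty,\CO(\check\lambda))\subseteq\on{CoInd}$ via the injective, $\fg(R)$-equivariant restriction of sections along the dense open embedding $\overset{\circ}{\bQ}\hookrightarrow\bQ_\infty$. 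There are then two things to establish: that $\Gamma(\bQ_\infty,\CO(\check\lambda))$ is $G$-integrable, and that it contains every $G$-integrable submodule of $\on{CoInd}$.

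First I would check integrability. By~\reft{isoq} the space $\Gamma(\bQ_\infty,\CO(\check\lambda))$ is isomorphic to $\widetilde\Gamma(\fQ,\CO(\check\lambda))$, the $\BG_m$-finite sections over $\fQ=\varinjlim_\alpha\QM^\alpha_\fg$. Each $\BG_m$-weight space here is finite-dimensional, being a weight space of $\Gamma(\QM^\alpha_\fg,\CO(\check\lambda))$ for $\alpha$ large, which is finite-dimensional since $\QM^\alpha_\fg$ is projective. As the constant subgroup $G\subset G(R)$ commutes with the loop-rotation $\BG_m$, it preserves these finite-dimensional weight spaces, so $\Gamma(\bQ_\infty,\CO(\check\lambda))$ is a union of finite-dimensional $G$-modules, i.e. $G$-integrable.

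For maximality I would show that any $\fg(R)$-submodule $W\subseteq\on{CoInd}$ on which $G$ acts locally finitely lies in $\Gamma(\bQ_\infty,\CO(\check\lambda))$; it suffices to prove that each $s\in W$, viewed as a rational section of $\CO(\check\lambda)$ on $\bQ_\infty$ that is regular on $\overset{\circ}{\bQ}$, is regular on all of $\bQ_\infty$. The key geometric observation is that evaluation at $\infty$ is a $G$-equivariant map $p_1:\bQ_\infty\to\bQ_1=\CB_\fg=G/B_-$ with $\overset{\circ}{\bQ}=p_1^{-1}(\CB_\fg^\circ)$, where $\CB_\fg^\circ=U\cdot e_-$ is the open cell; indeed an $R$-point of the projective $\CB_\fg$ lands in $\CB_\fg^\circ$ as soon as its special fibre does. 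Hence the codimension-one part of the boundary $\bQ_\infty\setminus\overset{\circ}{\bQ}$ is $\bigcup_{i\in I}p_1^{-1}(D_i)$, with $D_i=\overline{C_{s_i}}\subset\CB_\fg$ the $i$-th Schubert divisor. For each $i$ the constant $\fsl_2$-triple $\fsl_2^{(i)}\subset\fg$ acts through the geometric $\SL(2)^{(i)}\subset G$ covering its action on $\CB_\fg$, and restricting along the transversal $\SL(2)^{(i)}$-curves $\PP^1_i$ meeting $D_i$ reduces the pole-order estimate to the classical $\fsl_2$-computation on $\PP^1\setminus\{pt\}\cong\AA^1$: a section of $\CO(d)$ with $d=\langle\alpha_i,\check\lambda\rangle$ is $\fsl_2$-finite if and only if it has no pole at the point. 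Since $s\in W$ is $\fsl_2^{(i)}$-finite for every $i$, it acquires no pole along any $p_1^{-1}(D_i)$, hence none along any prime divisor in the boundary.

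The remaining step, passing from ``regular in codimension one'' to ``regular everywhere'', is where the only genuine difficulty lies: this is algebraic Hartogs and needs normality, whereas $\bQ_\infty$ is non-noetherian and its normality is precisely the open~\refco{codim two}. I would circumvent this by running the codimension-one estimate and the Hartogs extension on the normal finite-type models rather than on $\bQ_\infty$ itself: via~\reft{isoq} and~Lemma~\ref{2.8} the extension problem for a $\BG_m$-finite section is equivalent to the corresponding one on $\widehat\sW{}^\lambda_{G,\mu}$, and hence on $\QM^\alpha_\fg$, which is normal by~\cite[Corollary~2.10, Proposition~5.1]{BF11}; there a section regular outside a codimension-two subset extends. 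Taking the limit over $\alpha$ then gives $s\in\widetilde\Gamma(\fQ,\CO(\check\lambda))=\Gamma(\bQ_\infty,\CO(\check\lambda))$, which finishes maximality and the lemma. Thus the main obstacle is concentrated entirely in this descent to normal models and the bookkeeping that matches, across the system $\{\QM^\alpha_\fg\}$, the chain ``$\fsl_2^{(i)}$-finite $\Leftrightarrow$ bounded pole order along $p_1^{-1}(D_i)$ $\Leftrightarrow$ extends''.
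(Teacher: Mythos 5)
Your overall strategy (control poles along the boundary divisors of $\overset{\circ}{\bQ}\subset\bQ_\infty$ by $\fsl_2^{(i)}$-finiteness, then extend across codimension two by normality of finite-type models) is genuinely different from the paper's, and it concentrates all the difficulty exactly where you say it does --- and then does not resolve it. The reduction ``via \reft{isoq} and Lemma~\ref{2.8} the extension problem \dots is equivalent to the corresponding one on $\widehat\sW{}^\lambda_{G,\mu}$, and hence on $\QM^\alpha_\fg$'' is not actually covered by those statements: \reft{isoq} compares $\Gamma(\bQ_\infty,\CO(\check\lambda))$ with $\widetilde\Gamma(\fQ,\CO(\check\lambda))$, and Lemma~\ref{2.8} compares $\widehat\sW{}^\lambda_{G,\mu}$ with $\widehat\QM{}^\alpha_\fg$; neither identifies a $\fg$-finite section over the big cell $\overset{\circc}{}\overset{\circ}{\bQ}$ with a compatible family of sections over the opens $ev_\infty^{-1}(U\cdot e_-)\subset\QM^\alpha_\fg$, nor tells you that such a family, once extended over each $\QM^\alpha_\fg$, reassembles into an element of $\Gamma(\bQ_\infty,\CO(\check\lambda))$ restricting to the section you started from. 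You would need an analogue of Lemma~\ref{2.8} for the big cell (injectivity of $\BC[\overset{\circ}{\bQ}]\to\prod_\alpha\BC[ev_\infty^{-1}(U\cdot e_-)]$ together with a stable range of surjectivity), plus the transversality statements about the $\SL(2)^{(i)}$-curves meeting the divisors $p_1^{-1}(D_i)$, which you assert but do not verify. As written the proposal is a program rather than a proof.

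The missing idea that makes all of this machinery unnecessary is the one the paper uses: $\bQ_\infty$ is the $G$-\emph{saturation} of $\overset{\circ}{\bQ}$ --- every $y\in G/U_-(R)$ can be moved into $C(R)$ by a \emph{constant} $g\in G$, because the $G$-translates of the big cell cover $\CB_\fg$ and an $R$-point of an open subscheme is detected by its closed point. Given $v$ in a finite-dimensional $\fg$-submodule $V$, the $\fg$-action on $V$ integrates to $G$, and one simply \emph{defines} the extension by $\hat v(y):=(gv)(gy)$ for any $g\in G$ with $gy\in C(R)$; well-definedness is immediate from the equivariance of $V$. This uses the integrability hypothesis exactly once, needs no normality, no Hartogs, no pole estimates and no descent to finite-type models, and applies to every element of a $G$-integrable submodule without any a priori $\BG_m$-finiteness. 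Your argument for the $G$-integrability of $\Gamma(\bQ_\infty,\CO(\check\lambda))$ itself (finite-dimensional $\BG_m$-weight spaces preserved by the constant $G$) is fine, as is the dualization reducing the quotient form of the statement to the submodule form.
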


\begin{proof}
Note that $\bQ_\infty$ is the $G$-saturation of $\overset{\circ}{\bQ}$.
Let $v\in\Gamma(\overset{\circ}{\bQ},\CO(\check\lambda))$ lie in a
finite-dimensional $\fg$-submodule
$V\subset\Gamma(\overset{\circ}{\bQ},\CO(\check\lambda))$. The action of
$\fg$ on $V$ integrates to the action of $G$. Let us view $v$ as a
$\check\lambda$-covariant function on $C(R)$. We have to check that
$v$ is the restriction of a $\check\lambda$-covariant function $\hat v$
on $G/U_-(R)$ to $C(R)$. Given a point $y\in G/U_-(R)$ we can find $g\in G$
such that $g(y)\in C(R)$. Then we define $\hat{v}(y):=u(gy)$ where we view
$u:=gv\in V$ as a $\check\lambda$-covariant function on $C(R)$. Clearly, this
is well defined, i.e. independent of a choice of $g$.
\end{proof}

Recall the notion of Weyl $\fg(\widetilde R)$-module $\CW(\check\lambda)$
for dominant $\check\lambda\in\Lambda^\vee_+$, see e.g.~\cite{CFK}.
It is the maximal $G$-integrable $\fg(\widetilde R)$-quotient module of
$\on{Ind}_{\fu(\widetilde R)\oplus\ft}^{\fg(\widetilde R)}\BC_{\check\lambda}$
({\em loc. cit.}). Thus~Lemma~\ref{?} implies the first part of \reft{demazure}.

On the other hand, taking into account \reft{isoq} we also get
\prop{!}
For $\check\lambda\in\Lambda^\vee_+$, we have a natural isomorphism of
$\fg(\widetilde R)$-modules $\Gamma(\fQ,\CO(\check\lambda))^\vee\simeq
\CW(\check\lambda)$. \qed
\eprop

Combining this with \refc{gera} we get the following
\cor{weyl-whit}
$\chi(\calW(\chlam))=\Psi_{\chlam}(q,z)$.
\ecor
This is actually the statement of \reft{demazure}(1).
To prove \reft{demazure}(2) let us
recall that the Demazure module $D(\check\lambda)$ is a certain
$\fg(\widetilde R)$-submodule of an irreducible integrable level one
representation of
$\fg_{\on{aff}}$, see e.g.~\cite[2.2]{FL}. In addition, according
to~\cite{CFK},~\cite{FL} there exists
an action of $\CC[\AA^{\chlam}]$ on $\calW(\chlam)$ such that

1) This action commutes with $G(R)\rtimes \CC^*$.

2) $\calW(\chlam)$ is finitely generated and free
over $\CC[\AA^{\chlam}]$.

3) The fiber of $\calW(\chlam)$ at $\chlam\cdot 0$ is isomorphic to $D(\chlam)$.

\medskip
\noindent
Thus we get the following corollary, which is actually the statement of \reft{demazure}(2)
(as was mentioned in the introduction it was proved in~\cite{GLOIII} for $G=\SL(N)$):
\cor{dem}
The product $\chi(\widetilde\Gamma(\fQ,\CO(\check\lambda)))\cdot\displaystyle
{\prod_{i\in I}\prod_{r=1}^{\langle\alpha_i,\check\lambda\rangle}(1-q^r)}=\hatPsi_{\chlam}(q,z)$
is equal to
the character of the (finite dimensional) Demazure module $D(\check\lambda)$. In particular, it is a finite
linear combination of $\chi(L(\chmu))$'s with coefficients in $\ZZ_{\geq 0}[q]$.
\ecor

\ssec{}{Geometric interpretation of the $\CC[\AA^{\chlam}]$-action}
We conclude the paper by giving an interpretation of the $\CC[\AA^{\chlam}]$-action on $\calW(\chlam)$ in terms of
\reft{demazure}(1). This will enable us to prove the second assertion of \reft{weyl}.
It would be nice to prove that this action is free directly by geometric means (without referring to
\cite{FL}).

Let $T(R)_1$ denote the first congruence subgroup in $T(R)$ (i.e. the kernel of the natural map
$T(R)\to T$). Let $\grt(R)_1$ denote its (abelian) Lie algebra (i.e. the kernel of the natural map
$\grt(R)\to \grt$). We denote by $\grt(\tilR)_1\subset \grt(R)_1$ the corresponding subspace (consisting of all
mappings $\AA^1\to \grt$ which are equal to $0$ at $0$). Then for every $\chlam\in\Lam^{\vee}_+$ there exists a natural epimorphism
$\pi_{\chlam}:U(\grt(\tilR)_1)=\Sym(\grt(\tilR)_1)\to \CC[\AA^{\chlam}]$ defined by the following formula:
$$
\pi_{\chlam} (h\st^n)(\sum_i \gam_i x_i)=\sum_i \la h,\gam_i\rangle x_i^n.
$$
Here $h\in \grt$ and $\sum_i \gam_i x_i\in \AA^{\chlam}$.

Clearly, the group $T(R)_1$ acts (on the right) on the scheme $\bQ_{\infty}=G(R)/T\cdot U_-(R)$. Hence we get a natural action
of $\Sym(\grt(\tilR)_1)\subset\Sym(\grt(R)_1)$ on 
$\Gam(\bQ_{\infty}, \calO(\chlam))$ for every $\chlam\in \Lam^{\vee}$.
The following result is easy to prove; we leave
the details to the reader:
\prop{config}
\begin{enumerate}
\item The above action
of $\Sym(\grt(\tilR)_1)$ on $\Gam(\bQ_{\infty}, \calO(\chlam))$ factors through $\pi_{\chlam}$.
\item
The resulting action
of $\CC[\AA^{\chlam}]$ on $\Gam(\bQ_{\infty}, \calO(\chlam))^{\vee}=\calW(\chlam)$
coincides with the action considered in~\cite{CFK} and \cite{FL}.
\end{enumerate}
\eprop
From \refp{config} we immediately get the following
\cor{}
We have
$\Gam(G(R)/B_-(R),\calO(\chlam))\simeq D(\chlam)^{\vee}$ (this is the second assertion of \reft{weyl}).
\ecor
\begin{proof}
It follows from  \refp{config} and from the fact that $D(\chlam)$ is the fiber of $\calW(\chlam)$ over
$\chlam\cdot 0\in \CC[\AA^{\chlam}]$ that $D(\chlam)^{\vee}$ is isomorphic to the invariants of $\grt(\tilR)$ on $\calW(\chlam)^{\vee}$.
Since $\grt(\tilR)_1$ is dense in $\grt(R)_1$, it follows that
$$
(\calW(\chlam)^{\vee})^{\grt(\tilR)_1}=(\calW(\chlam)^{\vee})^{\grt(R)_1}.
$$
From \refp{!} we get
$$
(\calW(\chlam)^{\vee})^{\grt(R)_1}=\Gam(G(R)/T\cdot U_-(R),\calO(\chlam))^{\grt(R)_1}=\Gam(G(R)/B_-(R),\calO(\chlam)).
$$

\end{proof}

\begin{acknowledgement}
This paper emerged as a result of numerous conversations held between the first author and
A.~Borodin in IHES in June 2011. In particular, A.~Borodin has introduced the first author to the notion of $q$-Whittaker
functions and brought to his attention the papers \cite{GLOI}-\cite{GLOIII}. The first author wishes
also to thank IHES staff for exceptionally pleasant working conditions and hospitality.
We are grateful to B.~Feigin and S.~Loktev for
their numerous patient explanations about Weyl modules. The observation that
$\Gamma(G[[\st]]/T\cdot U_-[[\st]],\CO(\check\lambda))$ is a dual Weyl module is essentially due to them.
We would like to thank K.~Schwede and other mathoverflowers for their
explanations about Kodaira vanishing. The first author is also deeply indebted to Patrick
Clark for his patient explanations about Macdonald polynomials and to P.~Etingof and I.~Cherednik for explanations about the
$q$-Toda system.
M.F. was partially supported by the RFBR grants 12-01-00944, 12-01-33101,
13-01-12401/13,
the National Research University Higher School of Economics' Academic Fund
award No.12-09-0062 and
the AG Laboratory HSE, RF government grant, ag. 11.G34.31.0023.
This study comprises research findings from the ``Representation Theory
in Geometry and in Mathematical Physics" carried out within The
National Research University Higher School of Economics' Academic Fund Program
in 2012, grant No 12-05-0014.
\end{acknowledgement}



\end{document}